\documentclass[a4paper,11pt]{article}
\usepackage{graphicx}
\usepackage{amssymb}
\usepackage{enumerate}
\usepackage{array}
\usepackage{geometry}
\usepackage{fancyhdr}
\usepackage{fixltx2e}
\usepackage{amsmath}
\usepackage{verbatim}
\usepackage{url}

\DeclareMathOperator{\torord}{Tord}
\DeclareMathOperator{\tor}{Tor}

\newtheorem{theorem}{Theorem}[section]
\newtheorem{lemma}[theorem]{Lemma}
\newtheorem{proposition}[theorem]{Proposition}

\newtheorem{defn}[theorem]{Definition}

\newenvironment{proof}[1][Proof]{\begin{trivlist}
\item[\hskip \labelsep {\bfseries #1}]}{\end{trivlist}}

\newenvironment{remark}[1][Remark]{\begin{trivlist}
\item[\hskip \labelsep {\bfseries #1}]}{\end{trivlist}}

\textheight=660pt
\topmargin=-10pt

\begin{document}

\title{Preserving torsion orders when embedding into\\groups with `small' finite presentations}
\date{\today}
\author{Maurice Chiodo,  Michael E. Hill}

\maketitle

\begin{abstract}
We give a complete survey of a construction by Boone and Collins \cite{ref:26_relations} for embedding any finitely presented group into one with $8$ generators and $26$ relations. We show that this embedding preserves the set of orders of torsion elements, and in particular torsion-freeness. We combine this with a result of Belegradek \cite{ref:uni_tor_free_2} and Chiodo \cite{ref:uni_tor_free_1} to prove that there is an $8$-generator $26$-relator \emph{universal} finitely presented torsion-free group (one into which all finitely presented torsion-free groups embed).
\end{abstract}

\let\thefootnote\relax\footnotetext{2010 \textit{AMS Classification:} 20E06, 20F05.}
\let\thefootnote\relax\footnotetext{\textit{Keywords:} Torsion, Higman Embedding Theorem, finite presentations.}
\let\thefootnote\relax\footnotetext{The first author undertook this work as part of a project that has received funding from the European Union's Horizon 2020 research and innovation programme under the Marie Sk\l{}odowska-Curie grant agreement No.~659102.}
\let\thefootnote\relax\footnotetext{The second author was partially supported the Bridgwater Summer Research Fund, and the Beker Mathematics Fund.}

\section{Introduction}\label{sec:intro}

One famous consequence of the Higman Embedding Theorem \cite{ref:het_higm} is the fact that there is a \emph{universal} finitely presented (f.p.)~group; that is, a finitely presented group into which all finitely presented groups embed. Later work was done to give upper bounds for the number of generators and relations required. In \cite{ref:42_relations} Valiev gave a construction for embedding any given f.p.~group into a group with $14$ generators and $42$ relations.  In \cite{ref:26_relations} Boone and Collins improved this to $8$ generators and $26$ relations. Later in \cite{ref:21_relations} Valiev further showed that $21$ relations was possible. In particular we can apply any of these constructions to a universal f.p.~group to get a new one with only a few generators and relations.

More recently Belegradek in \cite{ref:uni_tor_free_2} and Chiodo in \cite{ref:uni_tor_free_1} independently showed that there exists a universal f.p.~torsion-free group; that is, an f.p.~torsion-free group into which all f.p.~torsion-free groups embed. This can be generalised even further: if ${X \subseteq \mathbb{N}}$ then we say a group $G$ is \emph{$X$-torsion-free} if whenever ${g^n = 1}$ with ${n \in X}$ then ${g = 1}$; in \cite[Theorem 2.11]{ref:uni_X-tor_free} Chiodo and McKenzie showed that there is a universal $X$-torsion-free f.p.~group for \emph{any} recursively enumerable set $X$.

In this paper we show that the embedding construction of Boone and Collins in \cite{ref:26_relations} preserves \emph{torsion orders}, the set of orders of torsion elements of a group $G$ (denoted $\torord(G)$), to show the following result.\\

\noindent \textbf{Theorem~\ref{thm:main_result}\:} \textit{There is a uniform algorithm for embedding a finitely presented group $A$ into another finitely presented group $H$ with $8$ generators and $26$ relations. Moreover, ${\torord(A) = \torord(H)}$.} \\

We then apply this to the main result in \cite[Theorem 2.11]{ref:uni_X-tor_free} to show the following: \\

\noindent \textbf{Theorem~\ref{thm:low_relator_uni_group}\:} \textit{Let $X$ be a recursively enumerable set. Then there is a universal finitely presented $X$-torsion-free group with $8$ generators and $26$ relations.} \\

Taking $X=\mathbb{N}$ in the above theorem gives a universal f.p.~torsion-free group with $8$ generators and $26$ relations. Furthermore, in Theorem~\ref{thm:lowish_relator_uni_group} we write down an explicit presentation of a universal f.p.~$X$-torsion-free group with $13$ generators and $33$ relations (with the exception of one extremely long relation, which we do not write down explicitly as it is complicated and depends on finding the group in \cite[Theorem 2.11]{ref:uni_X-tor_free} explicitly).

\section{Overview}\label{sec:overview}

This paper follows the construction given by Boone and Collins in \cite{ref:26_relations} for embedding any f.p.~group into one with $8$ generators and $26$ relations. Unfortunately, their construction in \cite{ref:26_relations} is not completely self-contained. In particular, they make use of several other embedding constructions, from several other papers (\cite{ref:het_aand, ref:group_to_semigroup, ref:borisov_orig, ref:borisov_col, ref:semigroup_to_group}). The objective of this paper is thus twofold:

\begin{enumerate}
 \item To give a complete survey of the construction by Boone and Collins in \cite{ref:26_relations}, and all peripheral constructions (as well as precisely where to find them), for embedding any finitely presented group into one with $8$ generators and $26$ relations.
 \item To note the torsion preserving properties of these constructions, where necessary, to prove our main results in Theorems \ref{thm:main_result} and \ref{thm:low_relator_uni_group}.
\end{enumerate}

Thus, this paper contains a complete description of a construction for embedding an arbitrary finitely presented group $A$ into a finitely presented group $H$ with $8$ generators and $26$ relations following the exposition in \cite{ref:26_relations}, and a proof that ${\torord(A) = \torord(H)}$. To succeed with objective 2 we were, in a sense, forced to carry out the survey in objective 1. It was only through a \emph{careful} analysis of \cite{ref:26_relations}, including all peripheral constructions, that we were able to complete objective 2. We now give the structure of the rest of our paper.

In Section~\ref{sec:prelims} we give the necessary preliminaries and background results needed to proceed with the constructions and proofs. These are all quite standard.

In Section~\ref{sec:group_to_semigroup} we give an overview of the construction by Boone, Collins and Matijasevi\v{c} in \cite{ref:group_to_semigroup} for simulating equality in any f.p.~group $A$ in a corresponding semigroup $\mathfrak{N}$ with only $2$ generators and $3$ relations. However, one of these relations is extremely long, which we discuss.

In Section~\ref{sec:semigroup_to_group} we give an overview of the construction by Collins in \cite{ref:semigroup_to_group}  for simulating equality in the above semigroup $\mathfrak{N}$ in a group $G$ with $7$ generators and $14$ relations. The purpose of $G$ is to `simulate' (but not embed) our original group $A$ in a $7$-generator $14$-relation group $G$. We note that $G$ is built up by several HNN-extensions, starting with the free group on $2$ generators, and hence is torsion-free.

In Section~\ref{sec:simulation} we give an overview of the construction by Boone and Collins in \cite{ref:26_relations} (which is a variant of Aanderaa's proof of the Higman Embedding Theorem in \cite{ref:het_aand}) to obtain an explicit embedding of our original group $A$ into a group $K_{4}$ with $13$ generators and $33$ relations. Since this is built up entirely by HNN-extensions, starting with the group $A*G$, we see that this embedding preserves torsion orders.

In Section~\ref{sec:explict} we write down a (mostly) explicit presentation for the group $K_{4}$ constructed in Section~\ref{sec:simulation}, except for one particular `long' relation, which we discuss. This long relation is inherited from the long relation in $\mathfrak{N}$ from Section~\ref{sec:semigroup_to_group}.

In Section~\ref{sec:borisov} we give an overview of the construction by Collins in \cite{ref:borisov_col} (which is itself a refinement of an idea due to Borisov in \cite{ref:borisov_orig}) for embedding an f.p.~group with many commuting generators into an f.p.~group with fewer generators and relations. We note that this embedding preserves torsion orders as it is built from HNN-extensions and an amalgamated free product.

Finally, in Section~\ref{sec:final} we apply the embedding construction from Section~\ref{sec:borisov} to the finite presentation $K_{4}$ given in Section~\ref{sec:explict} (as is done by Boone and Collins in \cite{ref:26_relations}), which we know to have several commuting relations. We end up with a presentation with $8$ generators and $26$ relations, though we do not write it out explicitly as it is very complicated. Thus, we obtain a torsion order preserving embedding of our original group $A$ into a group with $8$ generators and $26$ relations.

\section{Preliminaries}\label{sec:prelims}

This paper relies heavily on studying torsion in groups, thus it is necessary to introduce some notation on this. So, let $G$ be any group. We define the set of \emph{torsion elements} of $G$ to be $\tor(G) := \left\lbrace g \in G \:|\: \exists n>1 ,\: g^n=1 \right\rbrace$, and the set of \emph{torsion orders} of $G$ to be $\torord(G) := \left\lbrace n \in \mathbb{N} \:|\: n>1,\: \exists g \in G ,\: o(g)=n \right\rbrace$. $G$ is said to be \emph{torsion-free} if $\torord(G)=\emptyset$. 

It is possible to generalise the notion of torsion, which we do as follows: given any ${X \subseteq \mathbb{N}}$, we define the \emph{$X$-torsion} of $G$ to be $\tor^X(G) := \left\lbrace g \in G \:|\: \exists n \in X ,\: n>1 ,\: g^n=1 \right\rbrace$. For any set  ${X \subseteq \mathbb{N}}$, we define the \emph{factor completion} of $X$ to be $X\textsuperscript{fc} := \{ n \in \mathbb{N} \:|\: \exists m \geq 1,$ $nm \in X \} $, and we say $X$ is \emph{factor complete} if ${X\textsuperscript{fc} = X}$. $G$ is said to be \emph{$X$-torsion-free} if $\torord(G)\cap X\textsuperscript{fc}=\emptyset$; equivalently, if $\tor^{X}(G)=\{e\}$. We note that, in most cases, the $X$-torsion of a group behaves almost identically to the torsion of a group, when considering embedding theorems.

Crucial to our work are the standard embedding constructions of \emph{HNN extensions} and \emph{amalgamated free products}, which we define here for the convenience of the reader.

Let $G$ be a group and let $A_i$, $B_i$ be subgroups of $G$ for ${1 \leq i \leq n}$. Let ${\phi_i : A_i \rightarrow B_i}$ be isomorphisms for ${1 \leq i \leq n}$. We define the \emph{HNN-extension} of $G$ over $\phi_1, \ldots ,\phi_n$ to be
\begin{equation*}
G*_{\phi_1,\ldots,\phi_n} := G * \left\langle t_1, \ldots ,t_n \right\rangle / \left\langle\left\langle \left\lbrace \phi_i(g)^{-1}t_i^{-1}gt_i \:|\: g \in A_i,\: 1 \leq i \leq n \right\rbrace \right\rangle \right\rangle^{G * \left\langle t_1, \ldots ,t_n \right\rangle}
\end{equation*}

Let $G$ and $H$ be groups with $A$ a subgroup of $G$ and $B$ a subgroup of $H$. Let ${\phi : A \rightarrow B}$ be an isomorphism. We define the \emph{amalgamated free product} of $G$ and $H$ over $\phi$ to be  
\begin{equation*}
G*_{\phi}H := G * H / \left\langle \left\langle \left\lbrace \phi(g)^{-1}g \:|\: g \in A \right\rbrace \right\rangle \right\rangle^{G * H}
\end{equation*}

We will often be considering presentations of HNN-extensions and amalgamated free products. These can always be written by adding some generators and relations to a presentation of the original base group(s). Thus, it is convenient to introduce the following notation: For a group $G$ and $R \subseteq G * \left\langle t_1, \ldots ,t_n \right\rangle$ we define
\begin{equation*}
\left\langle G;\: t_1,\: \ldots t_n \:|\: R \right\rangle := (G * \left\langle t_1, \ldots ,t_n \right\rangle) / \left\langle\left\langle R \right\rangle \right\rangle^{G * \left\langle t_1, \ldots ,t_n \right\rangle}
\end{equation*}

It turns out that the torsion in an HNN-extension or amalgamated free product is extremely well-understood, as given by the following two standard results. We will make repeated use of these in our analysis of the embedding theorems later in this paper.

\begin{proposition}\emph{\cite[Theorem~11.69]{ref:rotman}}\label{prop:hnn_torsion}
Let ${H := G*_{\phi_1,\ldots,\phi_n}}$ be a $HNN$-extension of $G$. Then every torsion element of $H$ is conjugate to an element of $G$. In particular, ${\torord(G) = \torord(H)}$.
\end{proposition}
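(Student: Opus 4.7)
The plan is to reduce the statement to a well-known fact about group actions on trees, via Bass--Serre theory, which is the cleanest way to handle HNN-extensions with several stable letters simultaneously. First I would form the Bass--Serre tree $T$ associated to $H := G*_{\phi_1,\ldots,\phi_n}$, viewed as the fundamental group of a graph of groups consisting of a single vertex (with vertex group $G$) and $n$ loops (with edge groups $A_i$, identified in $G$ with $B_i$ via $\phi_i$). The group $H$ then acts on $T$ without inversions; vertex stabilisers are the $H$-conjugates of $G$, and edge stabilisers are the conjugates of the $A_i$.

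The second step is the standard dichotomy for group actions on trees: every element $h \in H$ is either \emph{elliptic}, fixing some vertex of $T$, or \emph{hyperbolic}, translating a bi-infinite axis by a positive integer $\ell(h)$. If $h$ has finite order then $h^n = 1$ for some $n \geq 1$, but for a hyperbolic element one has $\ell(h^n) = n\ell(h) > 0$, so $h^n \neq 1$. Hence every torsion element of $H$ must be elliptic, i.e.\ fix a vertex $v \in T$. But the stabiliser of $v$ equals $k G k^{-1}$ for some $k \in H$, so $h \in kGk^{-1}$, which means $k^{-1}hk \in G$. This gives the first assertion.

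For the equality ${\torord(G) = \torord(H)}$: the inclusion $G \hookrightarrow H$ (which is part of the normal form theorem for HNN-extensions) immediately gives $\torord(G) \subseteq \torord(H)$. Conversely, if $h \in H$ has order $m > 1$ then by the above there exists $k \in H$ with $k^{-1}hk \in G$, and $k^{-1}hk$ has the same order $m$ as $h$, so $m \in \torord(G)$.

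The only real obstacle is justifying the elliptic/hyperbolic dichotomy and the identification of vertex stabilisers, but both are entirely standard and can simply be cited from Serre's \emph{Trees} (or equivalently derived from Britton's Lemma, which would let one carry out the argument purely combinatorially by examining a cyclically reduced representative of a would-be torsion element and observing that a cyclically reduced word of positive $t_i^{\pm 1}$-length cannot have finite order). Since the paper only needs the statement as a black-box tool for propagating torsion-order information through towers of HNN-extensions, a short reference-based proof suffices.
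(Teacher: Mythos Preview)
Your argument is correct: the Bass--Serre tree approach cleanly handles the multi-stable-letter case, the elliptic/hyperbolic dichotomy forces torsion elements to be elliptic, and the identification of vertex stabilisers with conjugates of $G$ finishes the job. The deduction of $\torord(G)=\torord(H)$ from the conjugacy statement is also fine.

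By way of comparison, the paper does not actually supply its own proof of this proposition: it is stated as a quotation of \cite[Theorem~11.69]{ref:rotman} and used only as a black box throughout. So your write-up is strictly more than what the paper offers. Rotman's proof proceeds combinatorially via the normal form (Britton's Lemma) rather than via actions on trees; your parenthetical remark that one could equally argue from a cyclically reduced representative is exactly that route. Either approach is standard and adequate here, and since the paper only ever invokes the result to push $\torord$ through towers of HNN-extensions, a one-line citation (as the paper does) or your short Bass--Serre paragraph would both serve.
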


\begin{proposition}\emph{\cite[Theorem~11.69]{ref:rotman}}\label{prop:amlg_torsion}
Let ${K := G*_{\phi}H}$ be an amalgamated free product. Then every torsion element of $K$ is conjugate to an element of $G$ or $H$. In particular, $\torord(K) =$ $\torord(G) \cup \torord(H)$.
\end{proposition}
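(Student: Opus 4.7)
The plan is to reduce the statement to the standard normal form theorem for amalgamated free products, and then exploit the fact that the length of a cyclically reduced word is multiplicative under powers. First I would recall (or cite) the normal form: writing $A \leq G$ and $B \leq H$ with $\phi : A \to B$, and choosing sets of right coset representatives for $A$ in $G$ and for $B$ in $H$ (each containing $1$), every element of $K := G*_{\phi}H$ can be written uniquely as $c \cdot x_1 x_2 \cdots x_n$, where $c \in A$ (identified with $B$ via $\phi$) and the $x_i$ are nontrivial coset representatives alternating between the two sides. The nonnegative integer $n$ is then a well-defined \emph{length} $\ell(k)$ on $K$, and $\ell(k)=0$ iff $k$ lies in the amalgamated subgroup.

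Next I would introduce cyclic reduction. Given $k \in K$ with $\ell(k) = n \geq 2$, if the first and last coset factors of $k$ come from \emph{opposite} sides then the normal form of $k^m$ has length $mn$ for every $m \geq 1$, and so $k$ has infinite order. If instead they come from the \emph{same} side, one can conjugate $k$ by the first factor (or rather by its representative) to strictly reduce $\ell(k)$, producing a conjugate $k'$ with $\ell(k') < \ell(k)$; iterating, one reaches a conjugate that is either cyclically reduced in the sense above, or has length $\leq 1$. The first case has infinite order by the length argument, so if $k$ is torsion then some conjugate has length $\leq 1$, i.e.\ it lies inside $G$ or inside $H$. This proves the first assertion.

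For the ``In particular'' clause, one inclusion is immediate: $\torord(G) \cup \torord(H) \subseteq \torord(K)$, since $G$ and $H$ embed into $K$ (a fact that itself follows from the normal form, but is standard). Conversely, any $k \in K$ of order $n > 1$ is conjugate to some $g \in G$ or $h \in H$ of the same order, hence $n \in \torord(G) \cup \torord(H)$. Combining both directions yields $\torord(K) = \torord(G) \cup \torord(H)$.

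The main obstacle, and indeed the technical heart of the argument, is establishing the normal form theorem with its uniqueness, since everything else — the length function, the behaviour under powers, and the cyclic reduction step — follows formally. In a full write-up I would either cite Britton's lemma / the standard treatment (as the authors do via Rotman \cite{ref:rotman}) or sketch the classical proof via the action of $K$ on its Bass--Serre tree, where torsion elements necessarily fix a vertex, and the vertex stabilisers are precisely the conjugates of $G$ and $H$.
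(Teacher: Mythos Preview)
The paper does not actually supply a proof of this proposition; it is quoted verbatim as a standard fact with a citation to Rotman \cite[Theorem~11.69]{ref:rotman}. Your sketch is a correct outline of the classical argument (normal form, cyclic reduction, multiplicativity of length under powers, then the easy deduction of the $\torord$ equality), and indeed goes well beyond what the paper provides; the Bass--Serre alternative you mention at the end is also a valid route to the same conclusion.
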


A famous embedding theorem, originally due to Higman, Neumann and Neumann in \cite{ref:HNN}, allows us to embed any countable group into a 2-generator group. Moreover, by the above, the torsion in such an embedding is well understood. We state this here, in the form needed for our later analysis:

\begin{theorem}\label{thm:two_generator}
Let $G$ be a finitely presented group. There is a uniform algorithm for embedding $G$ into a finitely presented group $G'$ with $2$ generators, with ${\torord(G) = \torord(G')}$. 
\end{theorem}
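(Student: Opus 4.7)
The strategy is to build $G'$ from $G$ via one free product followed by one HNN extension, reading off torsion preservation from Propositions~\ref{prop:hnn_torsion} and~\ref{prop:amlg_torsion}.

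Fix a presentation $G = \langle g_1, \ldots, g_n \mid R_1, \ldots, R_m \rangle$. The first step is to form $G_0 := G * \langle a, b\rangle$. Viewed as an amalgamated product over the trivial subgroup, Proposition~\ref{prop:amlg_torsion} gives $\torord(G_0) = \torord(G)$, and $G_0$ is finitely presented in an algorithmic way. Next, I would engineer a single HNN extension of $G_0$ that simultaneously absorbs $b$ and $g_1, \ldots, g_n$ into $\langle a, t\rangle$, where $t$ is the new stable letter. Inside $G_0$ take
\begin{align*}
U &:= \langle a,\ b^{-1}ab,\ \ldots,\ b^{-n}ab^n \rangle,\\
V &:= \langle b,\ b^{-1}ab\cdot g_1,\ \ldots,\ b^{-n}ab^n\cdot g_n \rangle,
\end{align*}
with $\phi\colon U \to V$ sending the listed generators in order. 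The target group is
\begin{equation*}
G' := \bigl\langle G_0,\ t \ \bigm|\ t^{-1}at = b,\ t^{-1}(b^{-i}ab^i)t = b^{-i}ab^i\cdot g_i\ \text{for}\ 1 \leq i \leq n \bigr\rangle,
\end{equation*}
so Proposition~\ref{prop:hnn_torsion} gives $\torord(G') = \torord(G_0) = \torord(G)$. The first relation yields $b = t^{-1}at \in \langle a, t\rangle$, hence each $b^{-i}ab^i \in \langle a, t\rangle$, and the remaining relations rewrite each $g_i$ as a word in $\{a, t\}$; so $\{a, t\}$ generates $G'$, as required.

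The main obstacle is checking that $\phi$ is actually an isomorphism of subgroups, so that the HNN extension is legal. Freeness of $U$ of rank $n+1$ on the listed generators is a standard Nielsen computation in $\langle a, b\rangle$. Freeness of $V$ on its listed generators is the delicate part: using the free-product normal form for $G * \langle a, b\rangle$, one checks by a case analysis of all joins $v_i^{\pm 1} v_j^{\pm 1}$ (where $v_0 := b$ and $v_i := b^{-i}ab^i g_i$ for $i \geq 1$) that each $G$-syllable $g_i^{\pm 1}$ contributed by a letter with $i \geq 1$ is separated in the normal form from its neighbouring $G$-syllables by a non-trivial $\langle a, b\rangle$-syllable. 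Hence no reduced word in the symbols $v_0, \ldots, v_n$ can collapse to the identity in $G_0$, giving the required freeness. Once this is in hand, the entire construction is uniformly algorithmic in the presentation of $G$, finishing the proof.
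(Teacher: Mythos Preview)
Your approach is the classical Higman--Neumann--Neumann construction --- a free product with $F_2$ followed by a single HNN extension --- which is precisely what the paper invokes by citing \cite[Theorem~11.71]{ref:rotman}, and your torsion analysis via Propositions~\ref{prop:hnn_torsion} and~\ref{prop:amlg_torsion} is exactly the observation the paper attributes to \cite[Lemma~2.16]{ref:tor_len}. So the plan matches the paper's references.

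There is, however, a genuine gap in your freeness argument for $V$. Your case analysis tacitly assumes that each $g_i$ is nontrivial in $G$, but a finite presentation can have generators that are trivial in the group (and this cannot be detected algorithmically, so it cannot be preprocessed away uniformly). Concretely, if $g_1 = g_2 = 1$ in $G$, then $v_1 = b^{-1}ab$ and $v_2 = b^{-2}ab^2$, and one computes $v_0^{-1} v_1 v_0 v_2^{-1} = 1$; so $V$ is not free of rank $n+1$ and the HNN extension is not legitimate. The repair is minor and leaves the strategy intact: choose the $v_i$ so that their images under the projection $\pi\colon G * \langle a,b\rangle \to \langle a,b\rangle$ already freely generate a rank-$(n+1)$ subgroup. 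For instance keep $v_0 = b$ but set $v_i = a^{-i}ba^i\,g_i$ for $i\geq 1$; then $\pi(v_0), \ldots, \pi(v_n)$ is $\{b, a^{-1}ba, \ldots, a^{-n}ba^n\}$, free of rank $n+1$ by the same argument you used for $U$, and freeness of $V$ follows immediately since any relation among the $v_i$ would project to one among the $\pi(v_i)$. The two-generator conclusion still holds, as $b = t^{-1}at$ and $g_i = (a^{-i}ba^i)^{-1}\,t^{-1}(b^{-i}ab^i)\,t$ lie in $\langle a, t\rangle$.
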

\begin{proof}
This is a standard construction which can be found in \cite[Theorem 11.71]{ref:rotman}. The observation that this preserves torsion  orders can be seen in  \cite[Lemma 2.16]{ref:tor_len}.
\end{proof}

We will frequently need to deal with arbitrary presentations of groups and semigroups. These may have complicated relations, and are often difficult to work with. However, given any finite presentation of a semigroup (or indeed a group), there is a useful trick to re-write the presentation with only `short' relators, each of length at most $3$. The trade-off is that this process introduces many more generators and relators, but for our purposes it is worthwhile. We state the precise result here; note that the same process works for group presentations, but we only require it for semigroups.

\begin{proposition}\label{prop:length_3}
Let ${A = \left\langle a_1, \ldots , a_k \:|\: R_i = S_i \right\rangle}$ be a finite semigroup presentation where $R_i$, $S_i$ are non-trivial for each $i$. Then we can algorithmically add new generators to $A$ to get a finite presentation of the form ${A = \left\langle a_i \:|\: A_i = B_i \right\rangle}$ where each $A_i$ is a single letter and each $B_i$ has length $2$.
\end{proposition}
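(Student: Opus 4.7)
The plan is to apply a sequence of Tietze transformations, each introducing a new generator to stand for a length-two subword, until every relation has the required shape. Before doing this, I first dispose of a degenerate edge case: if any relation is of the form $a_j = a_k$ with both sides a single letter, I eliminate $a_k$ by substituting $a_j$ throughout the remaining relations and deleting $a_k$ from the generating set (a standard Tietze move). Iterating, one reaches an equivalent presentation in which every relation has at least one side of length $\geq 2$.

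For the main reduction, I process each remaining relation $R_i = S_i$ independently. Writing $R_i = r_1 \cdots r_p$ and $S_i = s_1 \cdots s_q$, and assuming (after possibly swapping sides) that $q \geq 2$, I introduce a telescoping chain of new generators $y_2, \ldots, y_{q-1}$ with defining relations
\[
y_2 = s_1 s_2, \qquad y_j = y_{j-1} s_j \quad (3 \leq j \leq q-1),
\]
each of which is already in the desired form. If $p = 1$, the original relation can then be rewritten as $r_1 = y_{q-1} s_q$, again in the desired form. If $p \geq 2$, I repeat the same telescoping construction on the left-hand side, introducing $z_2, \ldots, z_{p-1}$ with $z_2 = r_1 r_2$ and $z_j = z_{j-1} r_j$, and then introduce one final generator $w$ to stand for the common value of both sides, closing with the two relations $w = z_{p-1} r_p$ and $w = y_{q-1} s_q$. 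In every case the Tietze moves preserve the semigroup and every new relation is of the required single-letter-equals-length-two form.

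The main subtlety is really the preprocessing step: a relation $a_j = a_k$ between single letters genuinely cannot be captured by length-two relations on an enlarged generating set, since any such presentation admits a homomorphism to $(\mathbb{N}_{\geq 1}, +)$ in which $a_j$ and $a_k$ can be assigned distinct positive values consistently. Substituting one generator for the other is therefore forced. Once this edge case is absorbed, the remainder is a straightforward bookkeeping exercise, and recording the newly introduced generators and relations at each step yields the claimed uniform algorithm.
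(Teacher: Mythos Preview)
Your telescoping construction is exactly the paper's approach; the only cosmetic difference is that where you introduce a final joining generator $w$ with two relations $w=z_{p-1}r_p$ and $w=y_{q-1}s_q$, the paper telescopes one side all the way and uses the resulting generator directly as the left-hand side of the single linking relation.

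Your treatment of the degenerate case $a_j=a_k$ is more careful than the paper's (which tacitly assumes at least one side has length $\geq 2$, as holds in its only application), but the justification you give is not quite right. It is \emph{not} true that every presentation whose relations all have the shape $x=yz$ admits a homomorphism to $(\mathbb{N}_{\geq 1},+)$ separating $a_j$ from $a_k$: for instance the pair $b=a_jc$, $b=a_kc$ already forces $\phi(a_j)=\phi(a_k)$ in any such $\phi$. The genuine obstruction is that if the common element $a_j=a_k$ is indecomposable in $A$, then neither generator symbol can occur as the left side of a relation $x=yz$ (since $y,z$ would have to name elements of $A$ whose product equals $a_j$), and then no rewriting rule applies to the length-one word $a_j$ at all. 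Your substitution step remains a valid fix; only the stated reason needs replacing.
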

\begin{proof}
Write the first relation as ${x_1 \ldots x_n = y_1 \ldots y_m}$. Define $a_{k+1} := x_1x_2$, $a_{k+i} :=$  $a_{k+i-1}x_{i+1}$ for ${1 < i \leq n}$,  ${a_{k+n+1} := y_1y_2}$ and ${a_{k+n+i} := a_{k+n+i-1}y_{i+1}}$ for ${1 < i < m}$. Then we can add ${a_{k+1}, \ldots ,a_{k+m+n-1}}$ to the generating set along with their defining relations and the relation ${a_{k+n} = a_{k+n+m-1}y_n}$ to the presentation and then the initial relation is redundant. Now repeat for the rest of the relations. 
\end{proof}

A set ${X \subseteq \mathbb{N}}$ is said to be \emph{recursively enumerable} (r.e.) if there is a Turing machine, whose inputs are binary representations of integers, with halting set $X$. 

Given any word ${W = x_1^{a_1} \cdots x_n^{a_n}}$ we will define $W^{\#} := x_1^{-a_1} \cdots x_n^{-a_n}$.

We finish this section by stating a crucial result, which is the motivation for this paper, and which we explicity appeal to at the conclusion of our analysis.

\begin{theorem}\label{thm:uni_X-tor_free}
Let $X$ be a recursively enumerable set. Then there is a universal finitely presented $X$-torsion-free group $G$. i.e., $G$ is $X$-torsion-free, and for any finitely presented (even recursively presented) group $H$ is $X$-torsion-free we have an embedding $H \hookrightarrow G$.
\end{theorem}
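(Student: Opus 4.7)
The plan is to first build a countable recursively presented $X$-torsion-free group $U$ that contains every finitely presented $X$-torsion-free group, and then apply a torsion-order-preserving version of the Higman Embedding Theorem to embed $U$ into a finitely presented group $G$ which is still $X$-torsion-free. In broad strokes this mirrors Higman's original construction of a universal f.p.\ group, but the twist is that every step must be done compatibly with the $X$-torsion constraint.

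The naive first attempt would be to enumerate all finite group presentations $\{P_i\}_{i \in \mathbb{N}}$, giving groups $G_1, G_2, \ldots$, and form the countable free product $U := G_1 * G_2 * \cdots$. By iterating Proposition~\ref{prop:amlg_torsion} we would have $\torord(U) = \bigcup_i \torord(G_i)$, so $U$ would be $X$-torsion-free provided every $G_i$ was. The obstacle is that the set of f.p.\ $X$-torsion-free groups is not recursively enumerable (even for $X = \mathbb{N}$, via undecidability of the word problem), so we cannot restrict to those presentations directly. To circumvent this, I would use an auxiliary construction that, given any f.p.\ group $H$ and the r.e.\ set $X$, produces a recursively presented $X$-torsion-free group $\widetilde{H}$ with $H \hookrightarrow \widetilde{H}$ whenever $H$ itself is $X$-torsion-free. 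One natural candidate: enumerate all pairs $(w,n)$ with $w$ a word in the generators of $H$ and $n \in X$, run in parallel a search for a proof that $w^n =_H 1$, and whenever one is found adjoin the relation $w = 1$. If $H$ is already $X$-torsion-free then every adjoined relation was already trivial in $H$, so $H$ embeds in $\widetilde{H}$; in any case $\widetilde{H}$ is $X$-torsion-free and has a recursive presentation obtained uniformly from $H$ and a Turing machine for $X$. Now $U := \widetilde{G_1} * \widetilde{G_2} * \cdots$ is a recursively presented $X$-torsion-free group containing every f.p.\ $X$-torsion-free group as a subgroup.

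The hard part will be the final step: embedding the recursively presented $U$ into a finitely presented overgroup $G$ via Higman's theorem while preserving $X$-torsion-freeness. The vanilla Higman Embedding Theorem does not control torsion orders, so one must invoke a refined version in which torsion orders (and hence $X$-torsion-freeness) are preserved. In the case $X = \mathbb{N}$ this is precisely what Belegradek~\cite{ref:uni_tor_free_2} and Chiodo~\cite{ref:uni_tor_free_1} established, and extending their torsion-order-preserving Higman embedding to arbitrary r.e.\ $X$ is the essential technical content behind Theorem~\ref{thm:uni_X-tor_free}. Once such a refined embedding is in place, the universality of $G$ follows immediately from the embedding chain $H \hookrightarrow \widetilde{H} \hookrightarrow U \hookrightarrow G$ for any f.p.\ $X$-torsion-free $H$, together with the fact that $G$ inherits $X$-torsion-freeness from $U$.
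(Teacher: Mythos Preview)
The paper does not prove this theorem; it simply cites \cite[Theorem~2.11]{ref:uni_X-tor_free} (and \cite{ref:uni_tor_free_1}, \cite{ref:uni_tor_free_2} for the case $X=\mathbb{N}$). Your proposal goes beyond this and sketches a plausible architecture---build a recursively presented universal $X$-torsion-free group $U$, then apply a torsion-order-preserving Higman embedding---which is indeed the shape of the argument in the cited source.

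There is, however, a genuine gap in your construction of $\widetilde{H}$. You adjoin $w=1$ only when $w^n =_H 1$ for some $n\in X$; that is, you kill only the $X$-torsion already visible in $H$. Passing to this quotient can create \emph{new} $X$-torsion. For instance, with $H=\mathbb{Z}/4\mathbb{Z}=\langle a\rangle$ and $X=\{2\}$, the only nontrivial element satisfying $w^2=_H 1$ is $a^2$, so your $\widetilde{H}=\langle a\mid a^2=1\rangle\cong\mathbb{Z}/2\mathbb{Z}$, which is not $\{2\}$-torsion-free. Hence $U=\ast_i\,\widetilde{G_i}$ need not be $X$-torsion-free and the argument breaks down. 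The repair is to search for $X$-torsion not in $H$ but in the partial quotient built so far: whenever the current relation set proves $w^n=1$ for some $n\in X$, adjoin $w=1$ and iterate. Dovetailing keeps the resulting relation set recursively enumerable; any $v$ with $v^m=1$ (for $m\in X$) in the limit is witnessed at a finite stage, so $v=1$ gets adjoined; and if $H$ was already $X$-torsion-free every added relation is redundant, so $H\hookrightarrow\widetilde{H}$ as needed. With this correction your outline goes through.
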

\begin{proof}
This was shown in \cite[Theorem 2.11]{ref:uni_X-tor_free}. A special case where $X = \mathbb{N}$ can found in either \cite{ref:uni_tor_free_2} or \cite{ref:uni_tor_free_1}.
\end{proof}

\section{Simulating an arbitrary f.p.~group $A$ in a semigroup $\mathfrak{N}$}\label{sec:group_to_semigroup}

In this section we give an overview of the construction by Boone, Collins and Matijasevi\v{c} in \cite{ref:group_to_semigroup} for simulating equality in any f.p.~group $A$ in a corresponding semigroup $\mathfrak{N}$ with only $2$ generators and $3$ relations. We follow the exposition given in p.$1$--$2$ of Section~$1$ of \cite{ref:26_relations}.

Let $A$ be a finitely presented group. As noted in Theorem~\ref{thm:two_generator} we can uniformly embed $A$ in a $2$ generated group in a way which preserves torsion orders. So w.l.o.g.~we may assume that ${A = \left\langle a_1, a_2 | R_i \right\rangle}$ has $2$ generators. We now will construct a semigroup $\mathfrak{N}$ and map on words ${\chi : A \rightarrow \mathfrak{N}}$ as in p.$1$--$2$ of \cite{ref:26_relations}. We note that this construction was initially given in \cite{ref:group_to_semigroup}.

Define extra generators ${a_3 := a_1^{-1}}$ and ${a_4 := a_2^{-1}}$ so that each $R_i$ is now a positive word on $a_1$, $a_2$, $a_3$, $a_4$. Now define the semigroup
\begin{equation*}
A_* := \left\langle a_1,\: a_2,\: a_3,\: a_4,\: p \:|\: R_ip = p,\: a_jp = pa_j \:\: (\forall i,j) \right\rangle
\end{equation*}
Let ${a_5 := p}$. Since none of the defining relations have the empty word on either side we can use Proposition~\ref{prop:length_3} to algorithmically add extra generators so that each relation of $A_*$ has the form ${a_\lambda = a_\mu a_\nu}$. So we have a presentation for $A_{*}$ of the following form:
\begin{equation*}
A_* = \left\langle a_1, \ldots , a_r \:|\: A_i = B_i \right\rangle
\end{equation*}
where each $A_{i}$ has length $1$, and each $B_{i}$ has length 2. By repeating relations we may assume that there are ${s = 2^t}$ relations for some ${t \in \mathbb{N}}$.

We now define the map $\psi$ on words over $\{a_{1}, \ldots, a_{r}\}$ induced by ${\psi(a_i) := \beta^2\gamma^i\beta\gamma^{r+1-i}}$. Define the semigroup
\begin{equation*}
\mathfrak{L} := \left\langle \beta,\: \gamma \:|\: \psi(A_j) = \psi(B_j) \right\rangle
\end{equation*}
Then the induced map ${\overline{\psi} : A_* \rightarrow \mathfrak{L}}$ is a semigroup homomorphism. Observe that $\psi(a_i)$ has length ${u=r+4}$ for each $i$; so $\psi(A_j)$ has length $u$ and $\psi(B_j)$ has length $2u$. Let $x_{i,j}$ be the $i$\textsuperscript{th} letter of $\psi(A_j)$ and let $y_{i,j}$ be the $i$\textsuperscript{th} letter of $\psi(B_j)$. We define the `interlacing words'
\begin{align*}
M &:= x_{1,1} x_{1,2} \cdots x_{1,s} x_{2,1} \cdots x_{u,s} \\ 
N &:= y_{1,1} y_{1,2} \cdots y_{1,s} y_{2,1} \cdots y_{2u,s} 
\end{align*}

Now define the semigroup $\mathfrak{M}$ with presentation
\begin{equation*}
\mathfrak{M} := \left\langle \beta,\: \gamma,\: \varepsilon \:|\: \varepsilon\beta\beta = \beta,\: \varepsilon\gamma\beta = \beta,\: \varepsilon\beta\gamma = \gamma,\: \varepsilon\gamma\gamma = \gamma,\: M = N \right\rangle
\end{equation*}
and a  map $\tau$ on words over $\{\alpha, \beta, \epsilon\}$ induced by ${\tau(\beta) := \sigma\alpha}$, ${\tau(\gamma) := \sigma}$, ${\tau(\epsilon) := \alpha^2}$. From this, we define the semigroup
\begin{equation*}
\mathfrak{N} := \left\langle \alpha,\: \sigma \:|\: \sigma\alpha^2 = \alpha^2\sigma\alpha\sigma,\: \sigma\alpha = \alpha^2\sigma^2,\: \tau(M) = \tau(N) \right\rangle
\end{equation*}
Then it is easily verified that the induced map ${\overline{\tau} : \mathfrak{M} \rightarrow \mathfrak{N}}$ is a semigroup homomorphism. Finally, we let $\chi$ be the composition of $\psi$ and $\tau$ (as word maps) from words over $\{a_{1}, \ldots, a_{r}\}$ to words over $\{\alpha, \sigma\}$. Note that this is well defined on words but is not necessarily a semigroup homomorphism. However, as noted in Theorem~$1$ of \cite{ref:26_relations}, we have the following properties.

\begin{theorem}\emph{\cite[Theorem~$1$]{ref:26_relations}}\label{thm:semigroup_properties}
Let ${\Phi_0 := \chi(p)\sigma\alpha^{2t}}$ and let $U$ and $V$ be any words in $A$. Then
\begin{enumerate}[(i)]
\item \label{prt:multiplicative} ${\chi(UV) \equiv \chi(U)\chi(V)}$ as words.
\item \label{prt:simulation} ${U = V}$ in $A$ iff ${\chi(U)\Phi_0 = \chi(V)\Phi_0}$ in ${\mathfrak{N}}$.
\item \label{prt:sortofsurjectivity} Moreover for any word $Z$ of $\mathfrak{N}$ if ${Z\Phi_0 = \chi(V)\Phi_0}$ in ${\mathfrak{N}}$ then there exists a word $W$ in $A$ such that ${W = V}$ in $A$ and ${\chi(W) \equiv Z}$ as words.
\end{enumerate}
\end{theorem}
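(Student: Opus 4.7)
The plan is to prove all three parts by tracing the construction through the four-layer tower $A \rightsquigarrow A_* \rightsquigarrow \mathfrak{L} \rightsquigarrow \mathfrak{M} \rightsquigarrow \mathfrak{N}$, noting that each arrow is a simulation of word equality, and then composing these simulations appropriately. Part~(\ref{prt:multiplicative}) is immediate: both $\psi$ and $\tau$ are defined by substituting each generator letter by a fixed word, so as maps on \emph{words} (not as semigroup homomorphisms) both are multiplicative, and hence so is their composition $\chi$.

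For the forward direction of part~(\ref{prt:simulation}), I would first observe that $U = V$ in $A$ iff $Up = Vp$ in $A_*$: the relations $R_i p = p$ allow us to insert or delete group relators whenever $p$ is adjacent, and the commuting relations $a_j p = p a_j$ let us shuttle $p$ freely, giving the classical Markov-style equivalence. The auxiliary generators produced by Proposition~\ref{prop:length_3} are effectively abbreviations, so $Up = Vp$ survives the rewriting into the length-$2$ form $A_i = B_i$. Applying $\psi$ and then $\tau$ translates this equality to $\chi(U)\chi(p) = \chi(V)\chi(p)$ in $\mathfrak{N}$, and padding by $\sigma\alpha^{2t}$ is harmless, yielding $\chi(U)\Phi_0 = \chi(V)\Phi_0$.

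The backward direction of (\ref{prt:simulation}) is the substance of the theorem. Here each simulation must be shown to be \emph{faithful}. For $A_* \rightsquigarrow \mathfrak{L}$, the rigid block shape $\psi(a_i) = \beta^2\gamma^i\beta\gamma^{r+1-i}$ is designed precisely so that in any word appearing in a derivation in $\mathfrak{L}$ one can uniquely identify the $\psi$-blocks and read back a corresponding derivation in $A_*$. For $\mathfrak{L} \rightsquigarrow \mathfrak{M}$, the four eraser relations $\varepsilon\beta\beta=\beta$, $\varepsilon\gamma\beta=\beta$, $\varepsilon\beta\gamma=\gamma$, $\varepsilon\gamma\gamma=\gamma$ together with the single interlaced relation $M = N$ are engineered so that any application of $M = N$, after suitable introduction of $\varepsilon$ letters, corresponds to simultaneous applications of all the relations $\psi(A_j) = \psi(B_j)$ of $\mathfrak{L}$, and conversely any $\mathfrak{M}$-derivation between $\varepsilon$-free words can be retracted to an $\mathfrak{L}$-derivation. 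For $\mathfrak{M} \rightsquigarrow \mathfrak{N}$, the same style of argument applies with the encoding $\tau(\beta)=\sigma\alpha$, $\tau(\gamma)=\sigma$, $\tau(\varepsilon)=\alpha^2$: the two new relations $\sigma\alpha^2 = \alpha^2\sigma\alpha\sigma$ and $\sigma\alpha = \alpha^2\sigma^2$ serve as an encoding/decoding mechanism that lets one recognise the $\tau$-image of a word of $\mathfrak{M}$ and read off its preimage. The role of $\Phi_0 = \chi(p)\sigma\alpha^{2t}$ is that of a boundary marker anchoring the right-hand end of the word so that these retractions apply unambiguously; the tail $\sigma\alpha^{2t}$ records the depth $t$ of the rewriting from Proposition~\ref{prop:length_3}.

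Part~(\ref{prt:sortofsurjectivity}) strengthens (\ref{prt:simulation}) to a structural statement, and I would prove it by induction on the length of a derivation of $Z\Phi_0 = \chi(V)\Phi_0$ in $\mathfrak{N}$, maintaining the invariant that every intermediate word is of the form $\chi(W)\Phi_0$ for some word $W$ on $A$ with $W = V$ in $A$; the retractions established for (\ref{prt:simulation}) show that each permissible rewriting in $\mathfrak{N}$ either leaves the $\chi$-image intact or corresponds to a legal rewriting of $W$ in $A$. The main obstacle will be the $\mathfrak{L} \rightsquigarrow \mathfrak{M}$ step, since collapsing the entire family $\{\psi(A_j) = \psi(B_j)\}_{j=1}^{s}$ into the single interlaced relation $M = N$ and then recovering the individual relations demands a delicate bookkeeping argument using the eraser letter $\varepsilon$; as this is a survey, I would ultimately invoke \cite{ref:group_to_semigroup} for the fine detail of that bookkeeping rather than reproduce it in full.
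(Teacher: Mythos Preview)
Your proposal is correct and ultimately lands in the same place as the paper, but the paper's own proof is far terser: it simply observes that (\ref{prt:multiplicative}) is immediate from the definition of $\chi$ and then cites Lemmata~2.15 and~2.16 of \cite{ref:group_to_semigroup} for (\ref{prt:simulation}) and (\ref{prt:sortofsurjectivity}) without any further discussion. Your outline of the four-layer retraction argument is a reasonable expansion of what those lemmata actually contain, and since you yourself conclude by invoking \cite{ref:group_to_semigroup} for the delicate $\mathfrak{L}\rightsquigarrow\mathfrak{M}$ bookkeeping, the two approaches coincide in substance.
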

\begin{proof}
(\ref{prt:multiplicative}) follows immediately from the definition of $\chi$. Proof of (\ref{prt:simulation}) and (\ref{prt:sortofsurjectivity}) can be found in Lemmata~$2.15$ and~$2.16$ of \cite{ref:group_to_semigroup}.
\end{proof}

Thus by Theorem~\ref{thm:semigroup_properties}~(\ref{prt:simulation}) we see that we can `simulate' equality in $A$ by equality of corresponding words in ${\mathfrak{N}}$.

\section{Constructing an f.p.~group $G$ simulating the semigroup $\mathfrak{N}$}\label{sec:semigroup_to_group}

In this section we give an overview of the construction by Collins on p.$306$--$308$ in Part~I of \cite{ref:semigroup_to_group}, for simulating equality in the semigroup $\mathfrak{N}$ in a group $G$ with $7$ generators and $14$ relations. The purpose of $G$ is to `simulate' (but not embed) our original group $A$ in a $7$-generator $14$-relation group. $G$ will be built up by several HNN-extensions, starting with the free group on $2$ generators, and thus be torsion-free.

We start with $\mathfrak{N}$ from Section \ref{sec:group_to_semigroup}, and for convenience we identify $\alpha$ with $s_1$ and $\sigma$ with $s_2$. We will now define the group ${G = G(\mathfrak{N}, \Phi_0)}$ as in Part I of \cite{ref:semigroup_to_group}. Note that, compared to the original construction in \cite{ref:semigroup_to_group}, $\mathfrak{N}$ will replace $\mathfrak{L}$, and we will write $W^{\#}$ instead of $\overline{W}$. First we make the following definition:
\begin{defn}\label{def:augmented_ssemigroup}
We define the semigroup
\begin{equation*}
\mathfrak{N}_* := \left\langle s_1,\: s_2,\: q \:|\: F_mq = qK_m \:\: (m=1,\ldots,5) \right\rangle
\end{equation*}
where
\begin{align*}
F_1 &= s_2s_1^2 & K_1 &= s_1^2s_2s_1s_2 \\
F_2 &= s_2s_1^2 & K_2 &= s_1^2s_2^2 \\
F_3 &= \tau(M)  & K_3 &= \tau(N) \\
F_4 &= s_1      & K_4 &= s_1 \\
F_5 &= s_2      & K_5 &= s_2
\end{align*}
\end{defn}

Observe that the $F_{m}$'s (resp.~$K_{m}$'s) are just the left (resp.~right) sides of the relations of $\mathfrak{N}$, re-written in terms of $s_{1}, s_{2}$.

\begin{defn}\label{def:H}
We define the following sequence of groups
\begin{align*}
F := H_1 &:= \left\langle a,\: d | - \right\rangle \\
     H_2 &:= \left\langle H_{1};\: s_1,\: s_2 \:|\: s_b^{-1}as_b = a,\: s_b^{-1}ds_b = d^6ad^6 \:\: (b=1,2) \right\rangle \\
     H_3 &:= \left\langle H_2;\: q \:|\: q^{-1}(d^mad^mF_m^{\#})q = K_md^mad^m \:\: (m = 1,\ldots ,5) \right\rangle \\
     H_4 &:= \left\langle H_3;\: t \:|\: t^{-1}at = a,\: t^{-1}dt = d \right\rangle \\
G := H_5 &:= \left\langle H_4;\: k \:|\: k^{-1}ak=a,\: k^{-1}dk = d,\: k^{-1}(\Phi_0^{-1}q^{-1}tq\Phi_0)k = (\Phi_0^{-1}q^{-1}tq\Phi_0) \right\rangle
\end{align*}
where $\Phi_0 := \chi(p)s_2s_1^{2t} \ (= s_2s_1s_2s_1s_2^6s_1s_2^{r-3}s_1^{2t})$.
\end{defn}

\begin{remark}
In \cite{ref:semigroup_to_group} $H_2$ is called $G_4$, $H_3$ is called $G_2$ and $H_4$ is called $G_1$ (with $G_3$ absent.)
\end{remark}

\begin{lemma}\label{lem:H_are_HNN}
$H_{i+1}$ is a HNN-extension of $H_i$ for each $i=1,2,3,4$.
\end{lemma}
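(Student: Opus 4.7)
The plan is to exhibit, for each $i \in \{1,2,3,4\}$, explicit associated subgroups $A_i, B_i \leq H_i$ and an isomorphism $\phi_i : A_i \to B_i$ (or several such, when there are several stable letters) so that the defining relations of $H_{i+1}$ over $H_i$ are literally the HNN-relations $t_j^{-1} x t_j = \phi_{i,j}(x)$ for $x$ ranging over a generating set of $A_i$. Once those data are in place, $H_{i+1}$ is an HNN-extension of $H_i$ by definition.

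Cases $i=3$ and $i=4$ are immediate. In the presentation of $H_4$ over $H_3$, every defining relation has the shape $t^{-1} x t = x$ with $x \in \{a, d\}$, so $A_3 = B_3 = \langle a, d\rangle$ and $\phi_3 = \mathrm{id}$. Likewise for $H_5$ over $H_4$ every defining relation has the shape $k^{-1} x k = x$ with $x \in \{a, d, \Phi_0^{-1} q^{-1} t q \Phi_0\}$, so $A_4 = B_4 = \langle a, d, \Phi_0^{-1} q^{-1} t q \Phi_0\rangle$ and $\phi_4 = \mathrm{id}$. An identity map on a subgroup is trivially an isomorphism, so these two steps are HNN-extensions with no further work.

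For $i=1$, both stable letters $s_1, s_2$ use the same data: the relations say that each $s_b$ conjugates $a$ to $a$ and $d$ to $d^6 a d^6$. So I take $A_1 = \langle a, d\rangle = H_1$, $B_1 = \langle a, d^6 a d^6\rangle$, and $\phi_1 : a \mapsto a,\ d \mapsto d^6 a d^6$. The content is to check $\phi_1$ is an isomorphism. Since $H_1$ is free of rank $2$, it suffices to show that $a$ and $d^6 a d^6$ freely generate a rank-$2$ subgroup of $F(a,d)$. I would argue via commutativity: centralisers in free groups are cyclic, so if $a$ and $d^6 a d^6$ commuted they would both lie in some $\langle z\rangle$; as $a$ is a primitive generator this forces $z = a^{\pm 1}$, contradicting $d^6 a d^6 \notin \langle a\rangle$. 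Hence $[a, d^6 a d^6] \neq 1$, and by the standard fact that two non-commuting elements of a free group generate a free subgroup of rank $2$, $\phi_1$ is an isomorphism onto $B_1$.

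The main obstacle is case $i=2$. Here there is a single stable letter $q$, and the data is $A_2 = \langle d^m a d^m F_m^{\#} : 1 \leq m \leq 5\rangle$, $B_2 = \langle K_m d^m a d^m : 1 \leq m \leq 5\rangle$, with $\phi_2(d^m a d^m F_m^{\#}) := K_m d^m a d^m$. Verifying that $\phi_2$ is a well-defined isomorphism amounts to showing that both generating $5$-tuples are free bases of free subgroups of rank $5$ in $H_2$. This needs a careful Britton's-lemma analysis inside $H_2$, exploiting the fact (just established in case $i=1$) that $H_2$ is itself an HNN-extension of the free group $H_1$ with stable letters $s_1, s_2$: one examines $s_b$-pinches in a putative nontrivial relation, pushes the $d^m a d^m$ factors into the base $H_1$, and uses that the exponents $m = 1, \ldots, 5$ are distinct to prevent any cancellation. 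I would carry this out (or, more realistically, defer to the detailed verification in \cite{ref:semigroup_to_group}, where the elements $d^m a d^m F_m^{\#}$ and $K_m d^m a d^m$ were engineered precisely so that this argument succeeds), and conclude that $H_3 = H_2 *_{\phi_2}$ is indeed an HNN-extension.
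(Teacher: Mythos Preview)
Your proposal is correct and follows essentially the same approach as the paper: the identification of the associated subgroups and isomorphisms in all four cases, and the deferral of the freeness of the two $5$-tuples in case $i=2$ to \cite{ref:semigroup_to_group}, match the paper's proof exactly. The only cosmetic difference is in case $i=1$, where the paper verifies that $\{a,\,d^6ad^6\}$ is a free basis by the direct observation that a reduced word in these two elements is already reduced as a word in $a,d$, whereas you route through the facts that centralisers in free groups are cyclic and that two non-commuting elements of a free group freely generate a rank-$2$ subgroup---both arguments are valid and yield the same conclusion.
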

\begin{proof}
$H_5$ and $H_4$ are HNN-extensions of $H_4$ and $H_3$ with the identity maps on the subgroups ${\left\langle a, d, \Phi_0^{-1}q^{-1}tq\Phi_0 \right\rangle}$ and ${\left\langle a, d \right\rangle}$  respectively. Observe that ${\left\langle a, d^6ad^6 \right\rangle}$ is the free group on two generators as a word reduced on $a$ and $d^6ad^6$ is reduced on $a$ and $d$ as well. Hence $H_2$ is an HNN-extension of $H_1$ along the isomorphism sending ${a \mapsto a}$, ${d \mapsto d^6ad^6}$. Finally it can be shown (such as in Lemma~$15$ in Part I of \cite{ref:semigroup_to_group}) that the subgroups ${\left\langle d^mad^mF_m^{\#},\ m=1, \ldots, 5 \right\rangle}$ and ${\left\langle K_md^mad^m,\ m=1, \ldots, 5 \right\rangle}$ are free on their given generators. Hence $H_3$ is a HNN-extension of $H_2$ along the isomorphism ${d^mad^mF_m^{\#} \mapsto K_md^mad^m},\ m=1, \ldots, 5$.
\end{proof}

\begin{lemma}\label{lem:G_tor_free}
The group $G$ is torsion-free.
\end{lemma}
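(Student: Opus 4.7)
The plan is to chain together the HNN structure already exposed in Lemma~\ref{lem:H_are_HNN} with the torsion-preservation result of Proposition~\ref{prop:hnn_torsion}, starting from the free base group $H_1$.

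First I would observe that $H_1 = F = \langle a, d \mid - \rangle$ is free of rank $2$, hence torsion-free, so $\torord(H_1) = \emptyset$. Next, by Lemma~\ref{lem:H_are_HNN}, for each $i \in \{1,2,3,4\}$ the group $H_{i+1}$ is an HNN-extension of $H_i$ (possibly over multiple stable letters, but Proposition~\ref{prop:hnn_torsion} is stated precisely in that generality). Applying Proposition~\ref{prop:hnn_torsion} inductively yields
\begin{equation*}
\torord(G) = \torord(H_5) = \torord(H_4) = \torord(H_3) = \torord(H_2) = \torord(H_1) = \emptyset,
\end{equation*}
so $G$ is torsion-free.

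There is essentially no obstacle: all the work has been done in Lemma~\ref{lem:H_are_HNN}, where one has to verify that the relevant subgroups (the free group on $\{a, d^6ad^6\}$ in $H_2$, and the two free subgroups on the generators $d^m a d^m F_m^\#$ and $K_m d^m a d^m$ in $H_3$) really are free on the listed generators so that the prescribed maps extend to isomorphisms; in $H_4$ and $H_5$ the associated isomorphisms are just identities on explicit subgroups and are immediate. Given Lemma~\ref{lem:H_are_HNN}, the torsion statement is a one-line induction invoking Proposition~\ref{prop:hnn_torsion} at each step.
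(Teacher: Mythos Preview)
Your proof is correct and matches the paper's own argument essentially verbatim: both invoke Lemma~\ref{lem:H_are_HNN} and then apply Proposition~\ref{prop:hnn_torsion} step by step down the chain $G=H_5,\ldots,H_1=F$ to conclude $\torord(G)=\emptyset$. The only cosmetic difference is that the paper writes the equality chain in one displayed line without the surrounding commentary.
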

\begin{proof}
Using Lemma~\ref{lem:H_are_HNN} we can apply Proposition~\ref{prop:hnn_torsion} to get the following equality chain:
\[ 
\torord(G) = \torord(H_{4})=\torord(H_{3})=\torord(H_{2})= \torord(F) = \emptyset
\]
\end{proof}

In the next section we will use this f.p.~`simulator' group $G$ to simulate equality in $\mathfrak{N}$, and hence equality in $A$.

\section{Using the simulator group $G$ to embed $A$ into a 13 generator 33 relation group $K_{4}$}\label{sec:simulation}

In this section we give an overview of the construction by Boone and Collins in p.$3$--$4$ in Section~I of \cite{ref:26_relations} (which is a variant of Aanderaa's proof of the Higman Embedding Theorem in \cite{ref:het_aand}) to obtain an explicit embedding of our original group $A$ into a group $K_{4}$ with $13$ generators and $33$ relations. Since $K_{4}$ is built up entirely by HNN-extensions, starting with the group $A*G$, we see that this embedding preserves torsion orders.

From Definition~\ref{def:H} we see that the group $G$ has finite presentation
$$s_1, s_2, q, t, k, a, d;$$
$$s_j^{-1}as_j = a \:\:\:\:\:\:\:\:\:\:\:\: s_j^{-1}ds_j = d^6ad^6$$
$$q^{-1}(d^mad^mF_m^{\#})q = K_md^mad^m$$
$$ta = at \:\:\:\:\:\:\:\:\:\:\:\: td = dt$$
$$ka = ak \:\:\:\:\:\:\:\:\:\:\:\: kd = dk$$
$$k^{-1}(\Phi_0^{-1}q^{-1}tq\Phi_0)k = (\Phi_0^{-1}q^{-1}tq\Phi_0)$$
for all ${m = 1,\ldots,5}$, ${j = 1,2}$, and where $\Phi_0 := \chi(p)s_2s_1^{2t} \:\:(= s_2s_1s_2s_1s_2^6s_1s_2^{r-3}s_1^{2t})$.

The following theorem, a combination of two results from \cite{ref:semigroup_to_group}, gives the key properties of $\mathfrak{N}_{*}$ and $G$ that we require.

\begin{theorem}\emph{\cite[Lemma~0 and Technical~Result~(i)]{ref:semigroup_to_group}}\label{thm:semigroup_to_simulator}
Let $\Delta$ and $\Pi$ be any words of $\mathfrak{N}$. Then 
\[
 \Delta \Pi = \Phi_0 \textnormal{ in } \mathfrak{N}\ \Leftrightarrow  \ \Delta q \Pi = q\Phi_0 \textnormal{ in } \mathfrak{N}_* \  \Leftrightarrow \ k^{-1}((\Delta^{\#} q \Pi)^{-1} t (\Delta^{\#} q \Pi))k = (\Delta^{\#} q \Pi)^{-1} t (\Delta^{\#} q \Pi) \textnormal{ in } G
\]

\end{theorem}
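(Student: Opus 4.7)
The plan is to prove the two equivalences separately, peeling off the structure one layer at a time. The first is an equivalence entirely in the semigroup world; the second climbs the HNN tower $H_1 \subset H_2 \subset H_3 \subset H_4 \subset H_5 = G$ from Definition~\ref{def:H}, applying Britton's lemma level by level.

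For the first equivalence, the forward direction uses that the relations $F_4 q = q K_4$ and $F_5 q = q K_5$ force $q$ to commute with $s_1$ and $s_2$ in $\mathfrak{N}_*$. Thus any semigroup rewrite $X F_m Y \to X K_m Y$ for $m \in \{1,2,3\}$ (these being the three defining relations of $\mathfrak{N}$) lifts to the identity $q X F_m Y = X F_m q Y = X q K_m Y = q X K_m Y$ in $\mathfrak{N}_*$; iterating along a derivation of $\Delta\Pi = \Phi_0$ in $\mathfrak{N}$ gives $q\Delta\Pi = q\Phi_0$ in $\mathfrak{N}_*$, and hence $\Delta q \Pi = q \Phi_0$. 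For the reverse direction, I would pass to the monoids $\mathfrak{N}^{\mathrm{mon}}, \mathfrak{N}_*^{\mathrm{mon}}$ (which embed the corresponding semigroups) and define a monoid homomorphism $\eta : \mathfrak{N}_*^{\mathrm{mon}} \to \mathfrak{N}^{\mathrm{mon}}$ by $s_i \mapsto s_i$ and $q \mapsto 1$. Well-definedness of $\eta$ reduces to checking $F_m = K_m$ in $\mathfrak{N}$, which holds by definition for $m \leq 3$ and trivially for $m = 4,5$. Applying $\eta$ to the equality $\Delta q \Pi = q\Phi_0$ yields $\Delta \Pi = \Phi_0$ in $\mathfrak{N}$.

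For the second equivalence, write $W := \Delta^{\#} q \Pi$ and $Z := \Phi_0^{-1} q^{-1} t q \Phi_0$. Since $H_5 = H_4 *_k$ is the HNN-extension with associated subgroup $L := \langle a, d, Z\rangle$ under the identity isomorphism, Britton's lemma gives $k^{-1}(W^{-1} t W) k = W^{-1} t W$ iff $W^{-1} t W \in L$. So the task is to show $W^{-1} t W \in L$ in $H_4$ iff $\Delta q \Pi = q \Phi_0$ in $\mathfrak{N}_*$. The forward direction is a direct computation: assuming $\Delta q \Pi = q \Phi_0$, lift this equality to $H_3$, use the commutation relations $ta = at, td = dt$ in $H_4$, and rewrite $W^{-1} t W$ as $Z$ sandwiched between elements of $\langle a, d\rangle$. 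The reverse direction requires two further applications of Britton's lemma: first to the $t$-syllable structure of $H_4 = H_3 *_t$, yielding an equation $\Delta^{\#} q \Pi = q \Phi_0 \cdot h$ in $H_3$ with $h \in \langle a, d \rangle$; then to the $q$-syllable structure of $H_3 = H_2 *_q$, whose associated subgroups $\langle d^m a d^m F_m^{\#} \rangle$ and $\langle K_m d^m a d^m\rangle$ are free on their given generators (Lemma~15 of \cite{ref:semigroup_to_group}), forcing the constituent pieces to match in a way that recovers $\Delta q \Pi = q \Phi_0$ in $\mathfrak{N}_*$.

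The hard part will be this last step. The relations of $H_3$ have been conjugated by $d^m a d^m$ precisely so that the twisted generators are free (making the HNN-extension valid), but one must then show that Britton-pinches unwind cleanly back to plain semigroup moves $F_m q \to q K_m$. In other words, one has to verify that the $d^m a d^m$ conjugation keeps the five relations `independent' for the purposes of Britton reduction and that, upon stripping the conjugation, the combinatorics of the pinches is exactly the combinatorics of the semigroup rewrites in $\mathfrak{N}_*$. This is the content of Lemma~0 and Technical Result~(i) of \cite{ref:semigroup_to_group}, and is where the bulk of the careful bookkeeping lies.
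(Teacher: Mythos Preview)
The paper does not supply its own proof of this statement; it simply cites pages 307--313 of \cite{ref:semigroup_to_group}, where Collins establishes Lemma~0 and Technical~Result~(i). Your outline is a faithful reconstruction of that argument. The first equivalence is the elementary ``Lemma~0'': your forward direction (using $s_iq=qs_i$ from $m=4,5$ to slide $q$ and then mimicking each $\mathfrak{N}$-rewrite) and your reverse direction (the monoid retraction $q\mapsto 1$, well-defined because $F_m=K_m$ holds in $\mathfrak{N}$ for all $m$) are exactly the standard moves. For the second equivalence you correctly identify the structure: Britton's lemma applied at the $k$-level reduces the question to membership of $W^{-1}tW$ in $\langle a,d,Z\rangle\leq H_4$; then Britton at the $t$-level, using that the associated subgroup is $\langle a,d\rangle$, forces $\Delta^{\#}q\Pi$ and $q\Phi_0$ to agree up to $\langle a,d\rangle$-factors; and finally Britton at the $q$-level, with the freeness of $\langle d^mad^mF_m^{\#}\rangle$ and $\langle K_md^mad^m\rangle$ (Lemma~15 of \cite{ref:semigroup_to_group}), reads off the semigroup derivation in $\mathfrak{N}_*$.

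You are right that the final step carries the weight. The mechanism you describe is the one Collins uses: the $d^mad^m$ conjugations serve simultaneously to make the associated subgroups free and to tag each $q$-pinch with the index $m$ of the relation $F_mq=qK_m$ being applied; meanwhile the $H_2$-relations ensure that conjugation by any positive word in $s_1,s_2$ sends $\langle a,d\rangle$ into itself, so the $\langle a,d\rangle$-debris accumulated during the unwinding can be pushed to the outside, where the commutation of $t$ with $a,d$ disposes of it. This is precisely the content of the cited pages, and your sketch matches it; there is no gap beyond the bookkeeping you already flag.
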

\begin{proof}
This is proved in \cite[p.$307$--$313$]{ref:semigroup_to_group}.
\end{proof}

Let ${\theta(W) := \chi(W)^{\#}}$. We will now show how we simulate equality in $A$ by equality in $G$.

\begin{theorem}\emph{\cite[Theorem~$2$]{ref:26_relations}}\label{thm:simulator_propterty}
Let $U$ be any word of $A$. Then 
\[
 U = 1 \textnormal{ in } A \ \Leftrightarrow \ k^{-1}((\theta(U) q \Phi_0)^{-1} t (\theta(U) q \Phi_0))k = (\theta(U) q \Phi_0)^{-1} t (\theta(U) q \Phi_0) \textnormal{ in } G
\]
\end{theorem}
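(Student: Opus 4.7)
The plan is to chain Theorem~\ref{thm:semigroup_properties}(\ref{prt:simulation}) with Theorem~\ref{thm:semigroup_to_simulator} in a direct substitution; no new ideas should be required, since both ingredients have already done all the heavy lifting.

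First I would apply Theorem~\ref{thm:semigroup_properties}(\ref{prt:simulation}) with $V$ taken to be the empty word. Since $\chi$ acts on words and the empty word maps to the empty word, we have $\chi(V) \equiv 1$ and hence $\chi(V)\Phi_0 \equiv \Phi_0$. This gives the first equivalence
\[
U = 1 \text{ in } A \ \Longleftrightarrow \ \chi(U)\Phi_0 = \Phi_0 \text{ in } \mathfrak{N}.
\]

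Next I would invoke Theorem~\ref{thm:semigroup_to_simulator} with $\Delta := \chi(U)$ and $\Pi := \Phi_0$ (both are legitimate words over the generators $\alpha, \sigma$ of $\mathfrak{N}$). That theorem produces the second equivalence
\[
\chi(U)\Phi_0 = \Phi_0 \text{ in } \mathfrak{N} \ \Longleftrightarrow \ k^{-1}\bigl((\chi(U)^{\#} q \Phi_0)^{-1} t (\chi(U)^{\#} q \Phi_0)\bigr)k = (\chi(U)^{\#} q \Phi_0)^{-1} t (\chi(U)^{\#} q \Phi_0) \text{ in } G.
\]
Unwinding the definition $\theta(U) = \chi(U)^{\#}$ rewrites the right-hand side as exactly the $k$-commutation relation in the statement of the theorem, and composing with the first equivalence finishes the proof.

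The only bookkeeping to verify is that plugging in $V \equiv 1$ is legal in Theorem~\ref{thm:semigroup_properties}(\ref{prt:simulation}) (the statement is about words of $A$, and the empty word represents the identity), and that the substitution $\Delta \mapsto \chi(U)$, $\Pi \mapsto \Phi_0$ is permitted in Theorem~\ref{thm:semigroup_to_simulator} (both are arbitrary words of $\mathfrak{N}$). Both are routine, so I do not anticipate any genuine obstacle here: the substantive content lies entirely in the two quoted results from \cite{ref:group_to_semigroup} and \cite{ref:semigroup_to_group}, and the present theorem is simply the transitive composition of the two equivalences they supply.
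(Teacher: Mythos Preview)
Your proposal is correct and follows exactly the approach the paper takes: the paper's own proof simply states that this is Theorem~\ref{thm:semigroup_properties}(\ref{prt:simulation}) combined with Theorem~\ref{thm:semigroup_to_simulator}, and you have spelled out precisely the substitutions ($V$ the empty word, $\Delta = \chi(U)$, $\Pi = \Phi_0$) that make that combination go through.
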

\begin{proof}
This is just Theorem~\ref{thm:semigroup_properties}~(\ref{prt:simulation}) combined with Theorem~\ref{thm:semigroup_to_simulator}; see \cite[Theorem~2]{ref:26_relations}.
\end{proof}

Thus $G$ `simulates' equality in $A$. Note that $G$ has 7 generators and 14 relations, regardless of the original finite presentation of $A$ that we input. Note that, while $G$ `simulates' equality in $A$, there is no reason that $A$ should embed into $G$. Indeed, $G$ is always torsion-free (Lemma \ref{lem:G_tor_free}), regardless of whether $A$ has torsion or not. To embed $A$ into an f.p.~group with few generators and relators, we will now follow Aanderaa's proof of the Higman Embedding Theorem \cite{ref:het_aand}, in the same way as done on p.3--5 of \cite{ref:26_relations}.

\begin{defn}\label{def:K}
Let ${C = \left\langle c_1, c_2 \right\rangle}$ be an isomorphic copy of $A$. (Recall from the beginning of Section~$\ref{sec:group_to_semigroup}$ that w.l.o.g.~$A$ is $2$-generated.) Define ${k_0 := (q\Phi_0)k(q\Phi_0)^{-1}}$ and the following groups
\begin{align*}
K_1 &:= C*G \\
K_2 &:= \left\langle K_1;\: b_1, b_2 \:|\: b_i^{-1}s_jb_i = s_j,\: b_i^{-1}c_jb_i = c_j,\: b_i^{-1}k_0b_i = k_0c_i^{-1} \:\: (i,j=1,2) \right\rangle \\
K_3 &:= \left\langle K_2;\: f \:|\: f^{-1}\theta(a_i)^{\epsilon}b_i^{\epsilon}f = \theta(a_i)^{\epsilon},\: f^{-1}k_0f = k_0 \:\: (i=1,2, \: \epsilon = \pm 1) \right\rangle \\
K_4 &:= \left\langle K_3;\: h \:|\: h^{-1}th = tf, \text{\:} h^{-1}k_0h = k_0, \text{\:} h^{-1}s_jh = s_j \:\: (j=1,2) \right\rangle
\end{align*}
\end{defn}

\begin{lemma}\emph{\cite[Theorem~$3$~(i)]{ref:26_relations}}\label{lem:K_are_HNN}
$K_{i+1}$ is a HNN-extension of $K_{i}$ for ${i=1,2,3}$.
\end{lemma}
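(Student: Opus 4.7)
The plan is to handle the three cases $i = 1, 2, 3$ separately, following the same template as Lemma~\ref{lem:H_are_HNN}: for each stable letter I will exhibit the associated pair of subgroups of $K_i$, check that the defining relations really do describe an isomorphism between them, and then invoke the standard presentation of an HNN-extension. In each case the relations are visibly conjugation relations of the form $x_\ell^{-1} u_\ell x_\ell = v_\ell$, so everything reduces to showing that the assignment $u_\ell \mapsto v_\ell$ extends to an isomorphism ${\langle u_\ell\rangle \to \langle v_\ell\rangle}$; the cleanest way to do this is to exhibit a free-product (or otherwise free) decomposition of the ambient subgroup in which the given generators are free.

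For $i=1$ the base is $K_1 = C*G$ and the stable letters are $b_1, b_2$. For each $b_i$ the associated subgroups are both $\langle s_1, s_2, c_1, c_2, k_0\rangle$ (note that $k_0 c_i^{-1}$ generates the same subgroup as $k_0$ when $c_i$ is already present), and the map is identity on $s_j, c_j$ with $k_0 \mapsto k_0 c_i^{-1}$. The key step is to show that $\{s_1, s_2, k_0\}$ freely generates a free subgroup of $G$. This follows from the HNN-structure of $G$ assembled in Lemma~\ref{lem:H_are_HNN}: $s_1, s_2$ are the stable letters introduced at the $H_2$ step, while $k_0 = (q\Phi_0) k (q\Phi_0)^{-1}$ is a conjugate of the outermost stable letter $k$, so Britton's Lemma prevents any nontrivial word in $s_1, s_2, k_0$ from collapsing. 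Given this freeness, the normal form theorem for the free product $C*G$ yields $\langle s_1, s_2, c_1, c_2, k_0\rangle \cong F(s_1, s_2, k_0) * C$, and the specified map is visibly an automorphism of this free product (invertible by $k_0 c_i^{-1} \mapsto k_0 \cdot c_i^{-1} \cdot 1$). For $i=2$ the stable letter is $f$; the associated subgroups are $\langle \theta(a_i)^\epsilon b_i^\epsilon, k_0\rangle$ and $\langle \theta(a_i)^\epsilon, k_0\rangle$, and I would use Britton's Lemma applied to the HNN-extension of the previous step (with $b_1, b_2$ as stable letters) to see that $\{\theta(a_1)b_1, \theta(a_1)^{-1}b_1^{-1}, \theta(a_2)b_2, \theta(a_2)^{-1}b_2^{-1}, k_0\}$ is a free basis of its subgroup, mirroring $\{\theta(a_i)^{\pm 1}, k_0\}$. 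For $i=3$ the argument is analogous: $h$ conjugates $\langle t, k_0, s_1, s_2\rangle$ to $\langle tf, k_0, s_1, s_2\rangle$, and the presence of $f$ (stable letter of the $K_3$-step) together with $t$ (stable letter from $H_4$) forces the required freeness.

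The main obstacle in all three cases is precisely these freeness verifications for the associated subgroups: nothing rules out \emph{a priori} that some nontrivial word in the given generators equals the identity in $K_i$, which would force additional relations on $K_{i+1}$ and could obstruct the HNN-property. Rather than grind through Britton's Lemma afresh at each layer, my plan is to cite the detailed verifications from \cite{ref:26_relations} (which in turn trace back to Aanderaa~\cite{ref:het_aand}), since the only novel content we need is the torsion-preservation consequence. Once the HNN-extension structure is granted, Proposition~\ref{prop:hnn_torsion} immediately yields $\torord(K_4) = \torord(K_3) = \torord(K_2) = \torord(K_1) = \torord(A*G) = \torord(A) \cup \torord(G) = \torord(A)$, which is the punchline we are heading for in the next section.
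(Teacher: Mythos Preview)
Your approach coincides with the paper's: both ultimately defer the freeness verifications to \cite[p.~5--6]{ref:26_relations} rather than carry them out in full, so in substance the proofs agree. Your additional scaffolding (identifying the associated subgroups and pointing to Britton's Lemma) is helpful and broadly accurate.

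Two small points are worth cleaning up. First, in the $i=2$ case your claimed free basis $\{\theta(a_1)b_1,\ \theta(a_1)^{-1}b_1^{-1},\ \theta(a_2)b_2,\ \theta(a_2)^{-1}b_2^{-1},\ k_0\}$ cannot be a five-element free basis: since $\theta(a_i)$ is a word in $s_1,s_2$ and $b_i$ commutes with $s_1,s_2$ in $K_2$, one has $\theta(a_i)^{-1}b_i^{-1}=(\theta(a_i)b_i)^{-1}$, so the domain subgroup is free of rank three on $\{\theta(a_1)b_1,\ \theta(a_2)b_2,\ k_0\}$, matching the rank-three target $\langle \theta(a_1),\theta(a_2),k_0\rangle$. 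Second, the concluding chain $\torord(K_4)=\cdots=\torord(A)$ is the content of the \emph{next} lemma (Lemma~\ref{lem:K_torsion}) and should be kept separate from the present statement.
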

\begin{proof}
This is shown on p.$5$--$6$ of \cite{ref:26_relations}.
\end{proof}

Given how clear the construction of $K_{4}$ is, we can deduce its torsion orders.

\begin{lemma}\label{lem:K_torsion}
${\torord(K_4) = \torord(A)}$. 
\end{lemma}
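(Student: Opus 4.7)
The plan is to chain together the two standard torsion results for HNN-extensions and amalgamated free products, using the structural description of $K_1, K_2, K_3, K_4$ already set up.

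First I would apply Proposition~\ref{prop:hnn_torsion} three times. By Lemma~\ref{lem:K_are_HNN}, each $K_{i+1}$ is an HNN-extension of $K_i$ for $i = 1, 2, 3$, so Proposition~\ref{prop:hnn_torsion} yields
\[
\torord(K_4) = \torord(K_3) = \torord(K_2) = \torord(K_1).
\]
Thus the problem reduces to computing $\torord(K_1)$, where $K_1 = C * G$.

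Next I would handle the free product $K_1 = C * G$. A free product is an amalgamated free product over the trivial subgroup, so Proposition~\ref{prop:amlg_torsion} applies and gives $\torord(K_1) = \torord(C) \cup \torord(G)$. By construction $C \cong A$, so $\torord(C) = \torord(A)$, and by Lemma~\ref{lem:G_tor_free} we have $\torord(G) = \emptyset$. Combining,
\[
\torord(K_1) = \torord(A) \cup \emptyset = \torord(A),
\]
which together with the HNN chain above yields the desired equality $\torord(K_4) = \torord(A)$.

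There is no real obstacle here: the entire argument is an assembly of previously stated results. The only mild point to be aware of is justifying the use of Proposition~\ref{prop:amlg_torsion} for a plain free product, which one explains in a single line by noting that $C * G = C *_{\phi} G$ with $\phi$ the (vacuous) isomorphism between the trivial subgroups. All the genuine work was done in Lemma~\ref{lem:G_tor_free} (torsion-freeness of the simulator group $G$) and Lemma~\ref{lem:K_are_HNN} (verifying the HNN-extension structure of the tower), so the present lemma is essentially a bookkeeping corollary.
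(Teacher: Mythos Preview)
Your proof is correct and follows essentially the same route as the paper: apply Proposition~\ref{prop:hnn_torsion} along the HNN tower from Lemma~\ref{lem:K_are_HNN}, then Proposition~\ref{prop:amlg_torsion} to the free product $K_1 = C * G$, and finish with $C \cong A$ and Lemma~\ref{lem:G_tor_free}. Your extra remark that a free product is an amalgamated product over the trivial subgroup is a harmless clarification the paper leaves implicit.
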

\begin{proof}
Combining Lemmata~$\ref{lem:G_tor_free}$ and~$\ref{lem:K_are_HNN}$, with Propositions~$\ref{prop:hnn_torsion}$ and~$\ref{prop:amlg_torsion}$, gives that
\[
\torord(K_4) = \torord(K_3)=\torord(K_2)= \torord(K_1) = \torord(C)\cup\torord(G)  = \torord(A) \cup \emptyset
\]
\end{proof}

The reason for constructing $K_{4}$ was to embed $A$ into an f.p.~group with few generators and relations. We now see that $K_{4}$ can indeed be given by such a `small' finite presentation.

\begin{lemma}\emph{\cite[Theorem~$3$~(ii)]{ref:26_relations}}\label{lem:C_relations}
The defining relations of $C$ are redundant in $K_4$.
\end{lemma}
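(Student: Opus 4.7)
The plan is to verify, for each defining relation $R = R(a_1,a_2)$ of $A$, that the word $R(c_1,c_2)$ is already trivial in the group presented by Definition~\ref{def:K} with the $C$-relations removed. The argument will chain three ingredients: the simulator theorem supplies a centralization-by-$t$ statement in $G$; the stable letter $h$ of $K_4$ promotes it to centralization by $f$; then the $K_3$- and $K_2$-relations decode this into the statement that $R(b_1,b_2)$ commutes with $k_0$, which forces $R(c_1,c_2)=1$.

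Set $J := \theta(R)\,k_0\,\theta(R)^{-1}$. Since $R=1$ in $A$, Theorem~\ref{thm:simulator_propterty} yields $[J,t]=1$ in $G$ and hence in $K_4$. The defining relations of $K_4$ make $h$ commute with $s_1$, $s_2$, and $k_0$; as $\theta(R)\in\langle s_1,s_2\rangle$ we also get $[h,J]=1$. Conjugating $[J,t]=1$ by $h$ and using $h^{-1}th=tf$ gives $[J,tf]=1$, and combining with $[J,t]=1$ forces $[J,f]=1$. Conjugating by $\theta(R)^{-1}$ and using $[f,k_0]=1$ (from the $K_3$-relation $f^{-1}k_0f=k_0$) then produces
\[
\bigl[\theta(R)^{-1}\, f\, \theta(R)\, f^{-1},\; k_0\bigr]=1 \quad \text{in } K_4.
\]

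Next, the $K_3$-relation rearranges to $\theta(a_i)^{\epsilon} b_i^{\epsilon} = f\,\theta(a_i)^{\epsilon}\, f^{-1}$ for $i=1,2$ and $\epsilon=\pm 1$. Writing $R = a_{j_1}^{\epsilon_1}\cdots a_{j_n}^{\epsilon_n}$ and multiplying these identities, the interior $f^{-1}f$ pairs cancel and deliver $\prod_k\theta(a_{j_k})^{\epsilon_k}b_{j_k}^{\epsilon_k} = f\,\theta(R)\,f^{-1}$. Because each $b_i$ commutes with $s_1$ and $s_2$ by $K_2$, it commutes with every $\theta(a_j)$, so the $b$-factors on the left-hand side slide past all the $\theta$-factors and the product regroups as $\theta(R)\cdot R(b_1,b_2)$. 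Hence $R(b_1,b_2)=\theta(R)^{-1}f\,\theta(R)f^{-1}$ in $K_4$, and the previous display becomes $[R(b_1,b_2),k_0]=1$.

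To finish, the relation $b_i^{-1}k_0 b_i = k_0 c_i^{-1}$ (with its inverse $b_i k_0 b_i^{-1} = k_0 c_i$), together with the fact that each $b_i$ commutes with every $c_j$, gives by an easy induction on word length the telescoping formula $W^{-1}k_0 W = k_0\,W_c^{-1}$ for any word $W = b_{j_1}^{\epsilon_1}\cdots b_{j_n}^{\epsilon_n}$, where $W_c := c_{j_1}^{\epsilon_1}\cdots c_{j_n}^{\epsilon_n}$. Applied to $W=R(b_1,b_2)$ this reads $k_0 = k_0\,R(c_1,c_2)^{-1}$, so $R(c_1,c_2)=1$ in $K_4$, as required. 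The one subtle bookkeeping point will be in the third paragraph: one must be consistent about $\theta$ being extended to words with inverses by $\theta(a_i^{-1}):=\theta(a_i)^{-1}$ (the convention already built into the $K_3$-relation), so that the telescoping identity matches the $\theta(R)$ supplied by Theorem~\ref{thm:simulator_propterty}.
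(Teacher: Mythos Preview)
Your proof is correct and follows essentially the same route as the paper's. Both arguments start from the simulator centralisation supplied by Theorem~\ref{thm:simulator_propterty}, conjugate by $h$ to replace $t$ by $tf$ and hence isolate $f$, then use the $K_3$-relations together with $[b_i,s_j]=1$ to identify $\theta(W_a)^{-1}f\theta(W_a)f^{-1}$ with $W_b$, and finally read off $W_c=1$ from the $K_2$-relations. Your use of $J=\theta(R)k_0\theta(R)^{-1}$ and commutator notation is just a repackaging of the paper's equations~(\ref{eqn:t_conj}) and~(\ref{eqn:tf_conj}); the explicit telescoping formula $W^{-1}k_0W=k_0W_c^{-1}$ is exactly what the paper's final ``use the relations introduced in $K_2$'' step amounts to.
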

\begin{proof}
We will follow the proof given on p.$4$ of \cite{ref:26_relations}. Let $W_a$ be a word of $A$ with ${W_a = 1}$ in $A$. Let $W_b$ and $W_c$ be copies of $W_a$ with the $a_i$'s replaced with the corresponding $b_i$'s and $c_i$'s. To prove the result it is enough to show that ${W_c = 1}$ in $K_4$ without using any of the relations of $C$. Theorem~\ref{thm:simulator_propterty} tells us that
\begin{alignat}{2}
&& k^{-1}((\theta(W_a) q \Phi_0)^{-1} t (\theta(W_a) q \Phi_0))k &= (\theta(W_a) q \Phi_0)^{-1} t (\theta(W_a) q \Phi_0) \nonumber \\
&\Rightarrow\quad & k_0^{-1}\theta(W_a)^{-1} t \theta(W_a)k_0 &= \theta(W_a)^{-1} t \theta(W_a) \label{eqn:t_conj}
\end{alignat}
Now conjugate this by $h$ and using the relations introduced in $K_4$ we obtain 
\begin{alignat}{2}
&& h^{-1}k_0^{-1}\theta(W_a)^{-1} t \theta(W_a)k_0h &= h^{-1}\theta(W_a)^{-1} t \theta(W_a)h \nonumber \\
&\Rightarrow\quad & k_0^{-1}h^{-1}\theta(W_a)^{-1} t \theta(W_a)hk_0 &= h^{-1}\theta(W_a)^{-1} t \theta(W_a)h \nonumber \\
&\Rightarrow\quad & k_0^{-1}\theta(W_a)^{-1}h^{-1} t h\theta(W_a)k_0 &= \theta(W_a)^{-1}h^{-1} t h\theta(W_a) \nonumber \\
&\Rightarrow\quad & k_0^{-1}\theta(W_a)^{-1} tf \theta(W_a)k_0 &= \theta(W_a)^{-1} tf \theta(W_a) \label{eqn:tf_conj}
\end{alignat}
Now multiply the inverse of (\ref{eqn:t_conj}) by (\ref{eqn:tf_conj}) and then use the relations introduced in $K_3$ to get 
\begin{alignat*}{2}
&& k_0^{-1}\theta(W_a)^{-1} f \theta(W_a)k_0 &= \theta(W_a)^{-1} f \theta(W_a) \\
&\Rightarrow\quad & k_0^{-1} \theta(W_a)^{-1} \theta(W_a) W_b k_0 f &=  \theta(W_a)^{-1} \theta(W_a) W_b f \\
&\Rightarrow\quad & k_0^{-1} W_b k_0 &= W_b
\end{alignat*}
Now use the relations introduced in $K_2$ to get 
\begin{alignat*}{2}
&& W_b W_c &= W_b  \\
&\Rightarrow\quad & W_c &= 1 
\end{alignat*}
which is exactly what we wanted to show.
\end{proof}

So $K_{4}$ has a presentation with the 2 generators of $C$ (we discarded the relations of $C$), the generators (7) and relations (14) of $G$, and the stable letters (4) and HNN-relations (19) of the chain of three HNN-extensions from $C*G$ to $K_{4}$. Counting these up, we see that we have embedded $A$ into an f.p.~group $K_{4}$ with 13 generators and 33 relations, regardless of the input presentation for $A$.

\section{The explicit presentation of the group $K_{4}$ with 13 generators and 33 relations}\label{sec:explict}

We now give a (mostly) explicit presentation of $K_{4}$, and state its properties.

\begin{theorem}\label{thm:explicit_form}
There is a uniform algorithm for embedding a finitely presented group $A$ into another finitely presented group $K_4$ with $13$ generators and $33$ relations. Moreover, ${\torord(A) = \torord(K_4)}$, and $K_4$ is given by the presentation          
$$s_1, s_2, q, t, k, a, d, c_1, c_2, b_1, b_2, f, h;$$
$$s_j^{-1}as_j = a \:\:\:\:\:\:\:\:\:\:\:\: s_j^{-1}ds_j = d^6ad^6$$
$$q^{-1}(dads_2^{-1}s_1^{-2})q = s_1^2s_2s_1s_2dad$$ 
$$q^{-1}(d^2ad^2s_2^{-1}s_1^{-2})q = s_1^2s_2^2d^2ad^2$$
$$q^{-1}(d^3ad^3\tau(M)^{\#})q = \tau(N)d^3ad^3$$    
$$q^{-1}(d^{3+j}ad^{3+j}s_j^{-1})q = s_jd^{3+j}ad^{3+j}$$
$$ta = at \:\:\:\:\:\:\:\:\:\:\:\: td = dt$$
$$ka = ak \:\:\:\:\:\:\:\:\:\:\:\: kd = dk$$
$$k^{-1}(\Phi_0^{-1}q^{-1}tq\Phi_0)k = (\Phi_0^{-1}q^{-1}tq\Phi_0)$$
$$b_i^{-1}s_jb_i = s_j \:\:\:\:\:\:\:\:\:\:\:\: b_i^{-1}c_jb_i = c_j \:\:\:\:\:\:\:\:\:\:\:\: b_i^{-1}k_0b_i = k_0c_i^{-1}$$
$$f^{-1}\theta(a_i)^{\epsilon}b_i^{\epsilon}f = \theta(a_i)^{\epsilon} \:\:\:\:\:\:\:\:\:\:\:\: f^{-1}k_0f = k_0$$
$$h^{-1}th = tf \:\:\:\:\:\:\:\:\:\:\:\: h^{-1}k_0h = k_0 \:\:\:\:\:\:\:\:\:\:\:\: h^{-1}s_jh = s_j$$
for all ${i = 1,2}$, ${j = 1,2}$, ${\epsilon = \pm 1}$ and with the shorthand
\begin{align*}
\Phi_0 &:= \chi(p)s_2s_1^{2t} \:\:(= s_2s_1s_2s_1s_2^6s_1s_2^{r-3}s_1^{2t}) \\
k_0 &:= (q\Phi_0)k(q\Phi_0)^{-1}
\end{align*}
where $r$ and $t$ are the integers determined in Section~$3$.
\end{theorem}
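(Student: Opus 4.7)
The plan is to assemble the statement from previously established lemmas; the content is mostly bookkeeping rather than new mathematics. First, for the embedding $A \hookrightarrow K_4$: since $A$ is (by the reduction to $2$ generators at the start of Section~\ref{sec:group_to_semigroup}) isomorphic to $C$, the standard inclusion $C \hookrightarrow C*G = K_1$ is injective, and Lemma~\ref{lem:K_are_HNN} identifies each $K_{i+1}$ as an HNN-extension of $K_i$, so the base group embeds at each stage. Composing yields $A \hookrightarrow K_4$, and uniformity of the construction follows from the effective nature of every step in Sections~\ref{sec:group_to_semigroup}, \ref{sec:semigroup_to_group}, and~\ref{sec:simulation}. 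The torsion order equality ${\torord(A) = \torord(K_4)}$ is precisely Lemma~\ref{lem:K_torsion}.

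Next I would derive the explicit presentation. Starting from the $7$-generator, $14$-relation presentation of $G$ recalled at the beginning of Section~\ref{sec:simulation}, I would adjoin the free generators $c_1, c_2$ of $C$ (dropping the defining relations of $C$, which is permissible by Lemma~\ref{lem:C_relations}), then append the HNN stable letters $b_1, b_2, f, h$ together with their defining relations read directly off Definition~\ref{def:K}. The five $q$-relations listed in the statement are obtained by substituting the explicit expressions for $F_m$ and $K_m$ from Definition~\ref{def:augmented_ssemigroup} into the generic relation ${q^{-1}(d^mad^mF_m^{\#})q = K_md^mad^m}$; the cases $m=4,5$ collapse into the shorthand ${q^{-1}(d^{3+j}ad^{3+j}s_j^{-1})q = s_jd^{3+j}ad^{3+j}}$ for $j=1,2$, using that $F_{3+j}^{\#} = s_j^{-1}$ and $K_{3+j} = s_j$.

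Finally I would tally the counts: $7+2+2+1+1 = 13$ generators (from $G$, $C$, the stable letters $b_1,b_2$, then $f$, then $h$), and $14 + (4+4+2) + (4+1) + (1+1+2) = 33$ relations (from $G$, followed by the three batches of HNN relations introduced in $K_2$, $K_3$, $K_4$, with the relations of $C$ discarded via Lemma~\ref{lem:C_relations}). The main obstacle is simply careful bookkeeping of exponents and the head-count in the $q$-relations; the substantive mathematical content is already packaged into Lemmata~\ref{lem:K_are_HNN}, \ref{lem:K_torsion}, and~\ref{lem:C_relations}, so assembling them into the explicit presentation is a routine but error-prone verification.
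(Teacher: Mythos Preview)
Your proposal is correct and follows essentially the same approach as the paper: invoke Lemma~\ref{lem:K_are_HNN} for the embedding, Lemma~\ref{lem:K_torsion} for the torsion order equality, and Lemma~\ref{lem:C_relations} to discard the relations of $C$. Your version is in fact more explicit than the paper's, which omits the generator and relation counts and the derivation of the five $q$-relations from Definition~\ref{def:augmented_ssemigroup}.
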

\begin{proof}
By Lemma~\ref{lem:K_are_HNN} we have $C$ (which is an isomorphic copy of $A$) embedding into $K_4$. By Lemma~\ref{lem:K_torsion} we have ${\torord(A) = \torord(K_4)}$. Finally by Lemma~\ref{lem:C_relations} the defining relations of $C$ in $K_4$ are redundant and so $K_4$ has the given presentation.
\end{proof}

We can apply this construction to certain universal groups to get new ones with few generators and relations. 
\begin{theorem}\label{thm:lowish_relator_uni_group}
Let $X$ be a recursively enumerable set. Then there is a universal finitely presented $X$-torsion-free group with $13$ generators and $33$ relations with a presentation as given in Theorem~$\ref{thm:explicit_form}$.
\end{theorem}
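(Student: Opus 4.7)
The plan is to obtain the desired universal group by feeding the universal f.p.~$X$-torsion-free group provided by Theorem~\ref{thm:uni_X-tor_free} into the embedding machinery of Theorem~\ref{thm:explicit_form}. Concretely, let $G_X$ be a universal f.p.~$X$-torsion-free group, whose existence is guaranteed by Theorem~\ref{thm:uni_X-tor_free}. Applying the uniform algorithm of Theorem~\ref{thm:explicit_form} to $A := G_X$ produces a f.p.~group $K_4$ with $13$ generators, $33$ relations, and the explicit presentation written out in Theorem~\ref{thm:explicit_form}, together with an embedding $G_X \hookrightarrow K_4$ satisfying $\torord(G_X) = \torord(K_4)$.

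Next I would verify that $K_4$ is itself $X$-torsion-free. Since $G_X$ is $X$-torsion-free we have $\torord(G_X) \cap X^{\text{fc}} = \emptyset$, and the equality $\torord(K_4) = \torord(G_X)$ from Theorem~\ref{thm:explicit_form} immediately gives $\torord(K_4) \cap X^{\text{fc}} = \emptyset$, i.e.\ $\tor^{X}(K_4) = \{e\}$. Thus $K_4$ lies in the class of groups it is supposed to be universal for.

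Finally I would check universality. Given any f.p.~(or even recursively presented) $X$-torsion-free group $H$, Theorem~\ref{thm:uni_X-tor_free} yields an embedding $H \hookrightarrow G_X$, and composing with the embedding $G_X \hookrightarrow K_4$ produces an embedding $H \hookrightarrow K_4$. Hence $K_4$ is a universal f.p.~$X$-torsion-free group with the presentation stated in Theorem~\ref{thm:explicit_form}.

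There is no real obstacle here, as both ingredients have already been set up; the only thing that requires a moment's thought is the passage from ``torsion orders are preserved'' to ``$X$-torsion-freeness is preserved'', which is immediate from the definitions of $\tor^{X}$ and the factor completion $X^{\text{fc}}$. The presentation is inherited verbatim from Theorem~\ref{thm:explicit_form}, with the understanding that the integers $r$ and $t$ and the word $\chi(p)$ depend on the original presentation of $G_X$.
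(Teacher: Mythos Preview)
Your proof is correct and follows exactly the same approach as the paper's own proof, which simply says to apply Theorem~\ref{thm:explicit_form} to any universal finitely presented $X$-torsion-free group such as the one from Theorem~\ref{thm:uni_X-tor_free}. You have merely spelled out in detail the verification that $K_4$ is $X$-torsion-free and universal, which the paper leaves implicit.
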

\begin{proof}
Apply Theorem~\ref{thm:explicit_form} to any universal finitely presented $X$-torsion-free group, for example the one from Theorem \ref{thm:uni_X-tor_free}.
\end{proof}

\begin{remark}
We note that although most of the relations of $K_4$ are determined explicitly, the relation ${q^{-1}(d^3ad^3\tau(M)^{\#})q = \tau(N)d^3ad^3}$ is extremely long and depends entirely on our initial choice of finite presentation for $A$. As such we cannot give a completely explicit presentation for these universal groups using this method. It would be interesting to see a construction of a completely explicit presentation for a universal f.p.~torsion-free group. We have not been able to do this, as the Turing machine needed in Theorem \ref{thm:uni_X-tor_free} (coming from \cite[Theorem 2.11]{ref:uni_X-tor_free}) appears to be very difficult to construct. Theoretically this should be possible by the Church-Turing Thesis.
\end{remark}

\section{Embedding an f.p.~group with several commuting generators into one with fewer generators and fewer relations}\label{sec:borisov}

In  this section we give an overview of the construction by Collins in \cite{ref:borisov_col} (which is itself a refinement of an idea due to Borisov in \cite{ref:borisov_orig}) for embedding an f.p.~group with many commuting generators into an f.p.~group with fewer generators and relations. Borisov's original construction dealt with the case where one generator $c$ commutes with a set of  generators $\{u_{j}\}$. Collins' refinement deals with the more general case, where one set of generators $\{c_{j}\}$ simultaneously commute with another set $\{u_{k}\}$ (that is, $c_{j}u_{k}=u_{k}c_{j}$ for all $j,k$).

We note that this embedding preserves torsion orders, as it is built from HNN-extensions and an amalgamated free product. Due to the length of many of the relations, an explicit form will not be given.

\begin{defn}\label{def:gamma}
Suppose we have a finite presentation of a group
\begin{equation*}
\Gamma = \left\langle x_i,\: c,\: u_j \:|\: D,\: cu_j = u_jc \right\rangle
\end{equation*}
with $D$ a set of relations. We construct the following groups:
\begin{align*}
\Gamma_1 &:= \left\langle \Gamma;\: a,\: d |\: a^{-1}ca = c,\: d^{-1}cd = c \right\rangle \\
C'       &:= \left\langle c' \right\rangle &(\text{an isomorphic copy of } C = \left\langle c \right\rangle) \\
K        &:= \left\langle C';\: x,\: y \:|\: x^{-1}c'x = c',\: y^{-1}c'y = c' \right\rangle \\
\Gamma_2 &:= \left\langle \Gamma_1*K \:|\: c = c',\: d = y,\: u_ja^{-j}da^j = x^{-j}yx^j \right\rangle \\
\Gamma_3 &:= \left\langle \Gamma_2;\: b \:|\: b^{-1}cb = c,\: b^{-1}ab = d,\: b^{-1}db = x \right\rangle
\end{align*}
\end{defn}

\begin{lemma}\label{lem:gamma_HNN}
We have the following extensions
\begin{enumerate}[(i)]
\item $\Gamma_1$ is an HNN-extension of $\Gamma$
\item $K$ is an HNN-extension of $C'$
\item $\Gamma_2$ is an amalgamated free product of $\Gamma_1$ and $K$
\item $\Gamma_3$ is an HNN-extension of $\Gamma_2$
\end{enumerate}
\end{lemma}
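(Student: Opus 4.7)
My plan is to verify each of (i)--(iv) by identifying the associated subgroups, checking they are isomorphic via the proposed assignments, and appealing to the standard constructions of HNN-extensions and amalgamated free products.

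Parts (i) and (ii) are essentially immediate. In both cases the defining relations of the form $t^{-1}ct = c$ simply say that the stable letters $t \in \{a, d\}$ (resp.~$\{x, y\}$) commute with $\langle c \rangle$ (resp.~$\langle c' \rangle$) under the identity isomorphism, which is always a valid HNN-construction. No further verification is needed beyond the trivial fact that $\langle c \rangle \leq \Gamma$ and $\langle c' \rangle = C'$ are subgroups.

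Part (iii) is the main work. The proposed amalgamating subgroups are $M := \langle c, d, u_j a^{-j} d a^j : 1 \leq j \leq m \rangle \leq \Gamma_1$ and $L := \langle c', y, x^{-j} y x^j : 1 \leq j \leq m \rangle \leq K$, with the amalgamating map $\phi$ sending the displayed generators of $M$ to the corresponding generators of $L$. My plan is to show $M \cong C \times F(d, u_1 a^{-1}da, \ldots, u_m a^{-m} d a^m)$ and $L \cong C' \times F(y, x^{-1}yx, \ldots, x^{-m}yx^m)$, after which $\phi$ visibly extends to an isomorphism $M \to L$. For $L$: since $K \cong C' \times F(x, y)$ (immediate from (ii), as both stable letters fix $\langle c' \rangle$ under the identity map), and since the elements $y, x^{-1}yx, \ldots, x^{-m}yx^m$ form part of a free basis of $\ker(F(x,y) \to \langle x \rangle)$ by Schreier, they are free. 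For $M$: the key trick is to project $\Gamma_1 \to F(a, d)$ by killing all of $\Gamma$; this is a well-defined quotient since every relation of $\Gamma$ and every HNN-relation of $\Gamma_1$ becomes trivial, and it sends $d, u_j a^{-j} d a^j$ to the known-free $d, a^{-j} d a^j$. Hence the originals are free in $\Gamma_1$, and the same projection (which kills $c$) shows that $\langle c \rangle$ meets $\langle d, u_j a^{-j}d a^j : j \rangle$ trivially. Combined with the observation that $c$ commutes with each of these generators (using $[c, a] = [c, d] = 1$ in $\Gamma_1$ and $[c, u_j] = 1$ in $\Gamma$), the direct-product decomposition follows.

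Part (iv) then follows by a similar analysis inside $\Gamma_2$. Using (iii) and the standard fact that both factors of an amalgamated free product embed in it, we have $\Gamma_1, K \hookrightarrow \Gamma_2$. Thus $\langle c, a, d \rangle \leq \Gamma_1 \hookrightarrow \Gamma_2$ is $C \times F(a, d)$ by the projection argument of (iii); and $\langle c, d, x \rangle = \langle c', y, x \rangle \leq K \cong C' \times F(x, y)$ is $C \times F(d, x)$ via the amalgamating identifications $c = c'$, $d = y$. Both are direct products of $C$ with a rank-two free group, and the assignments $c \mapsto c$, $a \mapsto d$, $d \mapsto x$ match up the free bases, extending to an isomorphism that certifies $b$ as a valid stable letter. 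The main obstacle throughout is the freeness analysis in (iii); once this is established, the remainder is essentially bookkeeping about how amalgamation and HNN-constructions interact with the relevant subgroups.
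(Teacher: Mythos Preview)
Your proof is correct and complete. The paper's own ``proof'' consists solely of a citation to Collins \cite{ref:borisov_col}, so you have supplied considerably more detail than the paper itself. Your approach---identifying the amalgamating subgroups in (iii) as direct products $C \times F$ with $F$ free, and verifying freeness via the retraction $\Gamma_1 \twoheadrightarrow F(a,d)$ obtained by killing $\Gamma$---is the natural one and almost certainly coincides with what Collins does in the cited reference. The projection trick is exactly the right device here: it simultaneously establishes freeness of the non-central generators and shows that $\langle c \rangle$ meets the free part trivially, after which the direct-product structure (and hence the isomorphism $M \cong L$) is immediate. Part (iv) then follows by the same mechanism, using the factor embeddings $\Gamma_1, K \hookrightarrow \Gamma_2$ guaranteed by (iii).
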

\begin{proof}
These are shown in the proof of \cite[Theorem]{ref:borisov_col}.
\end{proof}

The clear construction of $\Gamma_{3}$ from $\Gamma$ allows us to understand the torsion orders of $\Gamma_{3}$.

\begin{lemma}\label{lem:gamma_torsion}
${\torord(\Gamma_3) = \torord(\Gamma)}$.
\end{lemma}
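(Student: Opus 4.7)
The plan is to chase the chain of HNN-extensions and the single amalgamated free product in Definition~\ref{def:gamma}, applying Propositions~\ref{prop:hnn_torsion} and~\ref{prop:amlg_torsion} at each step. By Lemma~\ref{lem:gamma_HNN}(iv) and Proposition~\ref{prop:hnn_torsion}, $\torord(\Gamma_3) = \torord(\Gamma_2)$. By Lemma~\ref{lem:gamma_HNN}(iii) and Proposition~\ref{prop:amlg_torsion}, $\torord(\Gamma_2) = \torord(\Gamma_1) \cup \torord(K)$. By Lemma~\ref{lem:gamma_HNN}(i)--(ii) and Proposition~\ref{prop:hnn_torsion}, $\torord(\Gamma_1) = \torord(\Gamma)$ and $\torord(K) = \torord(C')$.

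The remaining step is to observe that $C' = \langle c' \rangle$ is an isomorphic copy of the cyclic subgroup $C = \langle c \rangle \leq \Gamma$, so $\torord(C') = \torord(C) \subseteq \torord(\Gamma)$. Combining,
\begin{equation*}
\torord(\Gamma_3) = \torord(\Gamma) \cup \torord(C') = \torord(\Gamma),
\end{equation*}
which is exactly what we want.

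I do not anticipate any obstacle: all of the structural work is already packaged inside Lemma~\ref{lem:gamma_HNN}, and both torsion propositions are exactly tailored to this chase. The only minor point worth flagging explicitly in the write-up is that $\torord(C')$ is absorbed into $\torord(\Gamma)$ via the inclusion $C \hookrightarrow \Gamma$, so that the union on the right collapses to $\torord(\Gamma)$ rather than strictly enlarging it.
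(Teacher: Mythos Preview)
Your proof is correct and follows essentially the same approach as the paper: both chase the chain from Lemma~\ref{lem:gamma_HNN} using Propositions~\ref{prop:hnn_torsion} and~\ref{prop:amlg_torsion}, and both absorb $\torord(C')=\torord(C)$ into $\torord(\Gamma)$ via the inclusion $C\leqslant\Gamma$. The only difference is cosmetic---you spell out each intermediate equality, whereas the paper compresses the chain into one line.
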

\begin{proof}
As ${C \leqslant \Gamma}$ we have ${\torord(C) \subseteq \torord(\Gamma)}$. So Lemma~${\ref{lem:gamma_HNN}}$ now implies that $\torord(\Gamma_3)$ $=\torord(\Gamma)\cup\torord(C) = \torord(\Gamma)$, via Propositions~\ref{prop:hnn_torsion}~and~\ref{prop:amlg_torsion}.
\end{proof}

Borisov \cite{ref:borisov_orig} used the above construction to embed $\Gamma$ into an f.p.~group with fewer generators and relators; we follow the exposition of this as given by Collins in \cite{ref:borisov_col}.

\begin{theorem}\emph{\cite[Theorem~1]{ref:borisov_col}}\emph{(Borisov's Theorem)}\label{thm:borisov_thm}
Define the map on words induced by ${\lambda(c) := c}$, ${\lambda(x_i) = x_i}$ and 
\begin{equation*}
\lambda(u_j) := b^{-2}a^{-j}bab^{-1}a^jb^2a^{-j}b^{-1}a^{-1}ba^j
\end{equation*}
Define the group 
\begin{equation*}
\Gamma' := \left\langle x_i,\: a,\: b,\: c \:|\: \lambda(D),\: ac = ca,\: bc = cb \right\rangle
\end{equation*}
Then the induced map $\overline{\lambda} : \Gamma \rightarrow \Gamma'$ is an embedding of groups with $\torord(\Gamma') = \torord(\Gamma)$.
\end{theorem}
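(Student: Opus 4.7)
The plan is to identify $\Gamma'$ with the group $\Gamma_3$ built in Definition~\ref{def:gamma} via a sequence of Tietze transformations, and then invoke Lemmata~\ref{lem:gamma_HNN} and~\ref{lem:gamma_torsion} to obtain the embedding and the torsion-order equality. Concretely, I would verify that the homomorphism $\overline{\lambda}$ defined on generators is well-defined, and then observe that the resulting map factors as $\Gamma \hookrightarrow \Gamma_3 \cong \Gamma'$.

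First, I would write down $\Gamma_3$ by concatenating all generators ($x_i, c, u_j$ from $\Gamma$; $a, d$ from $\Gamma_1$; $c', x, y$ from $K$; and $b$ from the final HNN-extension) with all defining relations listed in Definition~\ref{def:gamma}. Then I would apply Tietze eliminations in the following order: use $c' = c$ and $y = d$ (from the $\Gamma_2$ relations) to remove $c'$ and $y$; use $d = b^{-1}ab$ (from the $\Gamma_3$ relations) to remove $d$; then use $x = b^{-1}db = b^{-2}ab^2$ to remove $x$. Finally, the $\Gamma_2$ relation $u_j a^{-j} d a^j = x^{-j} y x^j$ rearranges to $u_j = x^{-j} y x^j a^{-j} d^{-1} a^j$, and direct substitution of the expressions above gives exactly $u_j = b^{-2}a^{-j}bab^{-1}a^j b^2 a^{-j}b^{-1}a^{-1}b a^j = \lambda(u_j)$. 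I would use these equations to eliminate every $u_j$, leaving generators $\{x_i, a, b, c\}$.

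Next I would track what happens to the remaining relations. The relations of $D$ in $\Gamma$ become $\lambda(D)$ after substitution (since $\lambda$ fixes $x_i$ and $c$). The commutation relations $u_j c = c u_j$ of $\Gamma$ become $\lambda(u_j)c = c\lambda(u_j)$, which are consequences of $ac = ca$ and $bc = cb$. The $\Gamma_1$ relations $a^{-1}ca = c$ and $d^{-1}cd = c$ become $ca = ac$ and $(b^{-1}ab)c = c(b^{-1}ab)$; the former is retained and the latter follows from $ac = ca$ and $bc = cb$. The $K$ relations $x^{-1}c'x = c'$ and $y^{-1}c'y = c'$ similarly reduce to consequences of these two. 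The $\Gamma_3$ relation $bc = cb$ is retained. All remaining relations were used as substitutions. The resulting presentation is exactly $\left\langle x_i, a, b, c \mid \lambda(D),\ ac = ca,\ bc = cb \right\rangle = \Gamma'$, so $\Gamma_3 \cong \Gamma'$.

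By Lemma~\ref{lem:gamma_HNN}, each stage in the construction of $\Gamma_3$ is either an HNN-extension or an amalgamated free product, hence the inclusion $\Gamma \hookrightarrow \Gamma_3$ is injective; composing with the isomorphism $\Gamma_3 \cong \Gamma'$ gives the embedding, and the image of each generator matches the stated $\lambda$. The torsion-order equality $\torord(\Gamma') = \torord(\Gamma)$ is then immediate from Lemma~\ref{lem:gamma_torsion}. The main obstacle is the bookkeeping in the Tietze simplification, especially confirming that every $\Gamma_1$- and $K$-relation becomes a consequence of $ac = ca$ and $bc = cb$ after substitution, and that $\lambda(u_j)$ genuinely commutes with $c$ in $\Gamma'$ — both of which reduce to the observation that every letter appearing in $\lambda(u_j)$ is either $a$ or $b$, each of which commutes with $c$ by the retained relations.
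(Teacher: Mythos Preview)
Your proposal is correct and follows essentially the same approach as the paper: the paper's proof simply asserts that one can ``remove excess generators and relations to see that $\Gamma_3$ is the same group as $\Gamma'$ and that we get the given embedding $\overline{\lambda}$,'' and you have carried out precisely that Tietze reduction in detail, then appealed to Lemmata~\ref{lem:gamma_HNN} and~\ref{lem:gamma_torsion} for the embedding and torsion-order equality. Your bookkeeping (in particular the substitution yielding $\lambda(u_j)$ and the verification that the surviving commutation relations reduce to $ac=ca$ and $bc=cb$) is accurate.
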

\begin{proof}
We can remove excess generators and relations to see that $\Gamma_3$ is the same group as $\Gamma'$ and that we get the given embedding $\overline{\lambda}$. 
\end{proof}

Collins \cite{ref:borisov_col} was able to refine Borisov's construction, to deal with the case where there are two sets of commuting generators.

\begin{theorem}\emph{\cite[Theorem~2]{ref:borisov_col}}\label{thm:borisov_thm_var}
Suppose we have a finite group presentation
\begin{equation*}
\Gamma = \left\langle x_i,\: c_j,\: u_k \:|\: D,\: c_ju_k = u_kc_j \right\rangle
\end{equation*}
Then there is a map $\lambda$ (which can be found algorithmically) such that $\Gamma$ embeds into
\begin{equation*}
\Gamma' := \left\langle x_i,\: a,\: b,\: p,\: q \:|\: \lambda(D),\: ap = pa,\: bp = pb,\: aq = qa,\: bq = qb \right\rangle
\end{equation*}
with ${\torord(\Gamma') = \torord(\Gamma)}$. 
\end{theorem}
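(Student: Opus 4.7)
The plan is to reduce to Borisov's Theorem (Theorem \ref{thm:borisov_thm}) by first collapsing the commuting set $\{c_j\}$ down to just two generators $p, q$ via an intermediate embedding, and then running the original Borisov construction on the resulting presentation.

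First, I would embed $\Gamma$ into an intermediate group $\widehat{\Gamma}$ in which each $c_j$ has been replaced by a word $w_j$ in two new generators $p, q$, both of which centralise every $u_k$. To do this, form the HNN-extension of $\Gamma$ with stable letters $p, q$ subject to $[p, u_k] = [q, u_k] = 1$ (and $[p, x_i] = [q, x_i] = 1$, so that $p, q$ behave freely with respect to $\langle c_j \rangle$ but centralise everything else), then take an amalgamated free product identifying $\langle c_j \rangle \leqslant \Gamma$ with $\langle w_j(p, q) \rangle \leqslant F(p, q)$ along the isomorphism $c_j \mapsto w_j(p, q)$. By choosing the $w_j$ so that their images freely generate the correct subgroup (for instance via the Higman-Neumann-Neumann trick underlying Theorem \ref{thm:two_generator}), the amalgamation is valid and yields $\widehat{\Gamma} = \langle x_i, u_k, p, q \mid D'', [p, u_k] = [q, u_k] = 1 \rangle$, where $D''$ is the set $D$ with each $c_j$ replaced by $w_j(p, q)$.

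Next, I would apply a variant of Definition \ref{def:gamma} adapted to handle two central generators simultaneously. Rather than running Borisov's Theorem twice (once with central generator $p$ and once with $q$), which would introduce two disjoint pairs of encoding letters, I would merge the two constructions by using a single pair of auxiliary generators $a, d$ in the initial HNN step and arranging the subsequent amalgamation (analogous to $\Gamma_2$ in Definition \ref{def:gamma}) so that both $p$ and $q$ end up centralising the final pair $a, b$. The final HNN step, introducing a stable letter $b$ that conjugates $a \mapsto d$, $d \mapsto x$ while centralising both $p$ and $q$, together with Tietze transformations, then eliminates all intermediate generators, leaving the presentation $\Gamma' = \langle x_i, a, b, p, q \mid \lambda(D), [a, p] = [b, p] = [a, q] = [b, q] = 1 \rangle$ with $\lambda(c_j) := w_j(p, q)$ and $\lambda(u_k)$ equal to the Borisov-style encoding word in $a, b$.

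The main obstacle will be verifying that the two amalgamations (the initial $c_j \mapsto w_j(p, q)$ step and the internal Borisov-style step) are genuinely valid, i.e., that the identified subgroups are isomorphic via the stated maps and freely generated as required inside their respective ambient groups. These verifications are analogous to those in Lemma \ref{lem:gamma_HNN} and the proofs in \cite{ref:borisov_col}, and rely on Britton's Lemma together with standard normal-form arguments for free products. Once the entire construction is realised as a chain of HNN-extensions and one (or two) amalgamated free products, the equality $\torord(\Gamma') = \torord(\Gamma)$ follows immediately by applying Propositions \ref{prop:hnn_torsion} and \ref{prop:amlg_torsion} at each step, exactly as in Lemma \ref{lem:gamma_torsion}.
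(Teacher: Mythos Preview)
Your overall framework is right: the construction is a chain of HNN-extensions and amalgamated free products, and once that is established the torsion statement follows from Propositions~\ref{prop:hnn_torsion} and~\ref{prop:amlg_torsion} exactly as in Lemma~\ref{lem:gamma_torsion}. That part of your argument is fine and matches the paper, whose proof is essentially just a citation to \cite{ref:borisov_col} together with this observation.

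The gap is in your first step. After forming the HNN-extension of $\Gamma$ with stable letters $p,q$, the elements $c_j$ and $w_j(p,q)$ already live in the \emph{same} group, so ``identifying $\langle c_j\rangle$ with $\langle w_j(p,q)\rangle$'' is not an amalgamated free product: it is a quotient by the relations $c_j = w_j(p,q)$, and there is no reason $\Gamma$ survives that quotient. If instead you intend a genuine amalgam with an external copy of $F(p,q)$, you would need $\langle c_j\rangle\leqslant\Gamma$ to be free on the $c_j$, which is not assumed (the relations in $D$ may impose relations among the $c_j$). The Higman--Neumann--Neumann trick from Theorem~\ref{thm:two_generator} does not help here: it embeds a group into a $2$-generator group, but does not render the image free.

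Collins' actual construction avoids this by working directly with an isomorphic copy $C'$ of the subgroup $C=\langle c_j\rangle$ as it sits inside $\Gamma$ (with whatever relations it carries), just as Definition~\ref{def:gamma} does with the single cyclic $C=\langle c\rangle$. The new letters $p,q$ are attached to $C'$ on the $K$-side, and the amalgamation matches $C$ with $C'$ while the remaining identified elements are arranged (via the $d^mad^m$-style trick) to be free; no freeness of $\langle c_j\rangle$ is ever required. So the route is a direct two-sided modification of Definition~\ref{def:gamma}, not a preliminary reduction of the $c_j$'s to words in a free group.
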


Moreover, by setting $J=|\{c_{j}\}|$ and $K=|\{u_{k}\}|$, we see that $\Gamma'$ has $J+K-4$ fewer generators, and $JK-4$ fewer relations, than $\Gamma$ (assuming $J+K\geq 4$ and $JK\geq 4$).

\begin{proof}
This is similar to the above, see \cite[Theorem 2]{ref:borisov_col} for details of the changes required. The construction is still formed entirely of HNN-extensions and amalgamated free products, so we still have ${\torord(\Gamma') = \torord(\Gamma)}$.
\end{proof}

Although Theorems~\ref{thm:borisov_thm} and \ref{thm:borisov_thm_var} are embedding theorems, they should be seen as a sort of clever rewriting process.

\section{Embedding $K_{4}$ into a group with 8 generators and 26 relations}\label{sec:final}

We can now use Borisov's Theorem, and its variant by Collins, to prove our main results.

\begin{theorem}\label{thm:main_result}
There is a uniform algorithm for embedding a finitely presented group $A$ into another finitely presented group $H$ with $8$ generators and $26$ relations. Moreover, ${\torord(A) = \torord(H)}$. 
\end{theorem}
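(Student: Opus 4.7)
The plan is to combine the two main tools developed above. First I would apply Theorem~\ref{thm:explicit_form} to obtain a uniform algorithm that embeds $A$ into the group $K_4$ with $13$ generators and $33$ relations, satisfying $\torord(A) = \torord(K_4)$. This intermediate embedding comes equipped with an explicit presentation whose commutation structure can then be exploited.

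Next I would inspect the presentation of $K_4$ for large families of mutually commuting generators. The relation blocks $b_i^{-1}s_jb_i = s_j$ and $b_i^{-1}c_jb_i = c_j$ (for $i,j \in \{1,2\}$) show that every element of $\{b_1,b_2\}$ commutes with every element of $\{s_1,s_2,c_1,c_2\}$; there are also the smaller commuting blocks $\{t,k\}$ versus $\{a,d\}$ and $\{h\}$ versus $\{s_1,s_2\}$. I would then apply Theorem~\ref{thm:borisov_thm_var} (Collins' variant of Borisov's Theorem) to a suitable such block, possibly after adjoining a small number of commutation relations already derivable in $K_4$ so as to enlarge the block without changing the underlying group. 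That theorem collapses two commuting sets of sizes $J$ and $K$ into only $4$ fresh generators with $4$ commutation relations, producing a group with $J+K-4$ fewer generators and $JK-4$ fewer relations. Following the numerical bookkeeping of Boone and Collins in \cite{ref:26_relations}, I would arrange the choice of block (and any auxiliary padding) so that the final count lands at exactly $8$ generators and $26$ relations; the resulting group is our $H$.

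Torsion preservation is then immediate by chaining equalities: Theorem~\ref{thm:borisov_thm_var} gives $\torord(H) = \torord(K_4)$, and Theorem~\ref{thm:explicit_form} gives $\torord(K_4) = \torord(A)$, hence $\torord(H) = \torord(A)$. Structurally this is because every embedding used in the whole pipeline (Sections~\ref{sec:simulation} and~\ref{sec:borisov}) is built as a tower of HNN-extensions together with a single amalgamated free product, and Propositions~\ref{prop:hnn_torsion} and~\ref{prop:amlg_torsion} ensure torsion orders survive each such move.

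The hard part will be the bookkeeping in the second step: pinning down exactly which commuting family in $K_4$ to feed into Theorem~\ref{thm:borisov_thm_var}, and verifying that the resulting count lands at precisely $8$ and $26$ rather than something slightly off. The application of Borisov's trick is otherwise mechanical, and no new ideas are needed for torsion preservation. In effect, the task reduces to re-running the arithmetic of Boone and Collins while observing at each rewriting step that the map $\lambda$, together with the underlying HNN-and-amalgamation tower, leaves $\torord$ unchanged.
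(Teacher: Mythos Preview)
Your outline is correct in spirit, and the torsion argument is exactly right. However, the second step is not just bookkeeping, and as written it will not reach $8$ and $26$.

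The paper applies \emph{two} Borisov-type reductions, not one. First one changes generators in $K_4$, replacing $a,d,k$ by their conjugates $a_0:=q\Phi_0 a\Phi_0^{-1}q^{-1}$, $d_0:=q\Phi_0 d\Phi_0^{-1}q^{-1}$, $k_0:=q\Phi_0 k\Phi_0^{-1}q^{-1}$. This is the key step you are missing: in the original presentation $k$ commutes only with $a$ and $d$, but after this substitution the five relations $[k_0,a_0]=[k_0,d_0]=[k_0,f]=[k_0,h]=[k_0,t]=1$ all appear directly (the last because the relation $k^{-1}\Phi_0^{-1}q^{-1}tq\Phi_0 k=\Phi_0^{-1}q^{-1}tq\Phi_0$ becomes $[k_0,t]=1$). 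One then applies Theorem~\ref{thm:borisov_thm} (the single-element Borisov theorem, not the variant) with $c=k_0$ and $u_j\in\{a_0,d_0,f,h,t\}$, cutting $3$ generators and $3$ relations. Only after this does one apply Theorem~\ref{thm:borisov_thm_var} to $\{b_1,b_2\}$ versus $\{s_1,s_2,c_1,c_2\}$, cutting a further $2$ generators and $4$ relations, to land at $13-3-2=8$ and $33-3-4=26$.

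The smaller commuting blocks you listed are not usable: $\{t,k\}$ versus $\{a,d\}$ has $J=K=2$, giving zero saving, and $\{h\}$ versus $\{s_1,s_2\}$ has $J+K=3<4$. So without the change of variables to $a_0,d_0,k_0$ there is no second block large enough to finish the count. This substitution is a genuine idea, not mere padding, and should be made explicit in your argument.
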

\begin{proof}
In $K_4$ replace the generators $a$, $d$ and $k$ with ${a_0 := q\Phi_0 a \Phi_0^{-1}q^{-1}}$, ${d_0 := q\Phi_0 d \Phi_0^{-1}q^{-1}}$ and ${k_0 := q\Phi_0 k \Phi_0^{-1}q^{-1}}$. We now have relations for $k_0$ commuting with $a_0$, $d_0$, $f$, $h$ and $t$. So applying Theorem~\ref{thm:borisov_thm} we can reduce the number of generators by $3$ and and the number of relations by $3$. Also we have relations for $b_1$ and $b_2$ commuting with $s_1$, $s_2$, $c_1$ and $c_2$. So we can apply Theorem~\ref{thm:borisov_thm_var} to reduce the number of generators by $2$ and the number of relations by $4$.
\end{proof}

\begin{theorem}\label{thm:low_relator_uni_group}
Let $X$ be a recursively enumerable set. Then there is a universal finitely presented $X$-torsion-free group with $8$ generators and $26$ relations.
\end{theorem}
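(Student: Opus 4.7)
The plan is to deduce this by directly composing the two main ingredients already established earlier in the paper. First I would invoke Theorem~\ref{thm:uni_X-tor_free} to obtain a universal finitely presented $X$-torsion-free group $U$: every finitely presented (indeed, every recursively presented) $X$-torsion-free group embeds into $U$. Then I would feed $U$ into the uniform embedding construction of Theorem~\ref{thm:main_result}, producing an embedding $U \hookrightarrow H$ into a group $H$ with $8$ generators and $26$ relations, with the crucial extra property that $\torord(H) = \torord(U)$.

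Two small checks finish the proof. First, I need $H$ itself to be $X$-torsion-free. Since $U$ is $X$-torsion-free we have $\torord(U) \cap X^{\textnormal{fc}} = \emptyset$, and because $\torord(H) = \torord(U)$ by Theorem~\ref{thm:main_result}, the same holds for $H$, so $H$ is $X$-torsion-free. Second, I need $H$ to be universal among finitely presented $X$-torsion-free groups: given any such group $A$, universality of $U$ yields $A \hookrightarrow U$, and composing with $U \hookrightarrow H$ produces the required embedding $A \hookrightarrow H$.

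I do not expect a genuine obstacle here, because all of the work has already been done. The torsion-order-preserving embedding into a group with $8$ generators and $26$ relations is precisely the content of Theorem~\ref{thm:main_result}, while the existence of a universal $X$-torsion-free finitely presented group is precisely Theorem~\ref{thm:uni_X-tor_free}. The only subtlety worth stating clearly is that preservation of torsion \emph{orders} (not merely torsion-freeness) is exactly what is needed to transport the $X$-torsion-freeness condition from $U$ to $H$, since $X$-torsion-freeness is defined in terms of $\torord \cap X^{\textnormal{fc}}$.
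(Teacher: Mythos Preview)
Your proposal is correct and follows exactly the paper's approach: apply Theorem~\ref{thm:main_result} to a universal finitely presented $X$-torsion-free group supplied by Theorem~\ref{thm:uni_X-tor_free}. You have simply made explicit the two verifications (that $H$ is $X$-torsion-free via $\torord(H)=\torord(U)$, and that $H$ is universal via composition of embeddings) which the paper leaves implicit.
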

\begin{proof}
Apply Theorem~\ref{thm:main_result} to any universal finitely presented $X$-torsion-free group, for example the one from Theorem \ref{thm:uni_X-tor_free}.
\end{proof}

\ 

\noindent \scriptsize{\textsc{Department of Pure Mathematics and Mathematical Statistics, University of Cambridge,
\\Wilberforce Road, Cambridge, CB3 0WB, UK. 
\\mcc56@cam.ac.uk
\vspace{5pt}
\\Selwyn College, Cambridge
\\Grange Road, Cambridge, CB3 9DQ, UK.
\\meh69@cam.ac.uk}

\end{document}